\newtheorem{theorem}{Theorem}[section]
\newtheorem{lemma}[theorem]{Lemma}
\newtheorem{corollary}[theorem]{Corollary}
\newtheorem{theoremletter}{Theorem}
 \theoremstyle{definition}
 \newtheorem{definition}[theorem]{Definition}
 \newtheorem{example}[theorem]{Example}
  \newtheorem*{example*}{Example}
\numberwithin{equation}{section}
\newcommand {\N}{\mathbb{N}} 
\newcommand {\Z}{\mathbb{Z}} 
\newcommand {\R}{\mathbb{R}}
\newcommand{\FF}{\mathcal{F}}
\newcommand{\LL}{\mathcal{L}}
\newcommand{\PP}{\mathcal{P}}
\DeclareMathOperator{\mdim}{mdim}
\DeclareMathOperator{\mDim}{mDim}
\DeclareMathOperator{\ent}{ent}
\DeclareMathOperator{\Ent}{Ent}
\DeclareMathOperator{\Fix}{Fix}
\DeclareMathOperator{\Ker}{Ker}
\DeclareMathOperator{\im}{Im}
\begin{document}
\title[Garden of Eden theorem  for non-uniform cellular automata]{On the Garden of Eden theorem  for non-uniform cellular automata} 
\author[Xuan Kien Phung]{Xuan Kien Phung}
\address{Département d'informatique et de recherche opérationnelle,  Université de Montréal, Montréal, Québec, H3T 1J4, Canada.}
\email{phungxuankien1@gmail.com}   
\subjclass[2020]{37B10, 37B15, 37B50, 43A05, 43A07, 68Q80}
\keywords{Garden of Eden, Myhill property, surjunctivity, pre-injectivity, non-uniform cellular automata, mean entropy, mean dimension, Banach density} 

\begin{abstract}
We establish several extensions of the well-known Garden of Eden theorem for non-uniform cellular automata over the full shifts and over amenable group universes. In particular, our results describe quantitatively the relations  between the partial pre-injectivity and the size of the image of a non-uniform cellular automata. A strengthened surjunctivity  result is also obtained for multi-dimensional cellular automata over strongly irreducible subshifts of finite type. 
\end{abstract}
%\date{\today}
\maketitle
  
\setcounter{tocdepth}{1}

 \section{Introduction}
The famous Garden of Eden theorem for cellular automata (CA) characterizes amenable groups and states the equivalence between the surjectivity (a global property) and the pre-injectivity (a local property) \cite{myhill}, \cite{moore},  \cite{ceccherini}, \cite{bartholdi-kielak}. Extensions of the Garden of Eden theorem are also obtained for endomorphisms of symbolic varieties to deal with more general alphabet structures \cite{cscp-alg-goe}, \cite{phung-post-surjective}. 
Other recent developments of the Garden of Eden theorems are related to the surjunctivity conjecture of Gottschalk \cite{gottschalk} for CA and the Kaplansky conjecture \cite{kap} for group rings (cf.  \cite{phung-2020}, \cite{phung-geometric}, \cite{phung-weakly}, \cite{phung-twisted}). When the uniformity in the definition of CA breaks down, the cells in the universe of a CA can follow different transition rules. In such situations, either intentionally or  by undesirable perturbations,  many interesting and fundamental properties of CA are still preserved such as the surjunctivity over amenable or residually finite group universes, the closed image property, the shadowing property, and several decidable properties (cf. \cite{phung-israel},  \cite{phung-tcs}, \cite{phung-shadowing}, \cite{phung-decidable-nuca}). 
However, the Garden of Eden is far from being valid for non-uniform CA (NUCA) as illustrated by the following example (see \cite[Example 14.3]{phung-tcs}) of a simple injective one-dimensional NUCA which is not surjective. 

\begin{example}
\label{ex:1} 
Let $A=\{0,1\}$ and consider the NUCA $\tau \colon A^\Z \to A^\Z$ defined for every $x \in A^\Z$  by $\tau(x)(n)=x(n+1)$ if $n \leq -1$, $\tau(x)(n)=x(n)$ if $n=0$, and $\tau(x)(n)=x(n-1)$ if $n \geq 1$. Then $\tau$ is injective (thus pre-injective) since 
$\tau(x)=y$ implies that 
$x(n)=y(n-1)$ for $n \leq 0$ and $ x(n)=y(n+1)$ for $n \geq 0$. 
On the other hand, $\tau$ is not surjective since we can check that  
\[\im \tau = \{y \in A^\Z\colon y(-1)=y(0)=y(1)\}.
\]
\end{example}
\par 
Nevertheless, we show that the dynamics of NUCA are not completely chaotic and the above  example illustrates the worst-case behavior of pre-injective NUCA. More precisely, the principal goal of our paper is to establish several optimal extensions of the Garden of Eden over amenable group universes (Theorem~\ref{t:GOE-nuca-c-v1-intro}, Theorem~\ref{t:GOE-nuca-c-1-intro},  Theorem~\ref{t:characterization-linera-GOE-nuca-mdim-pre-intro}) which  describe quantitatively the exact relations between the partial pre-injectivity and the mean entropy or mean dimension of the image of non-uniform cellular automata. Moreover, we also obtain a refined surjunctivity result for multi-dimensional CA over strongly irreducible subshifts of finite type (Theorem~\ref{t:finite-z-d-almost-general-closed-subset-dense-periodic-intro}).   
 \par 
To state the main results, let us recall basic notions of symbolic dynamics. 
Given a discrete set $A$ and a group $G$, a \emph{configuration} $c \in A^G$ is a map $c \colon G \to A$ and  $x,y  \in A^G$ are \emph{asymptotic} if  $x\vert_{G \setminus E}=y\vert_{G \setminus E}$ for some finite subset $E \subset G$.  
The \emph{Bernoulli shift} action $G \times A^G \to A^G$ is defined by $(g,x) \mapsto g x$, 
where $(gx)(h) =  x(g^{-1}h)$ for  $g,h \in G$,  $x \in A^G$. 
We equip the \emph{full shift} $A^G$ with the \emph{prodiscrete topology}. 
Following an idea of von Neumann and Ulam  \cite{neumann}, a CA over the group $G$ (the \emph{universe}) and the set $A$ (the \emph{alphabet})  is  a self-map $A^G \righttoleftarrow$ which is $G$-equivariant and uniformly continuous (cf.~\cite{hedlund-csc}, \cite{hedlund}).  One regards elements $g \in G$ as the cells of the universe. 
When different cells evolve according to different local transition maps, we obtain  \emph{non-uniform CA} (NUCA) (\cite[Definition~1.1]{phung-tcs}, \cite{Den-12a}, \cite{Den-12b}):   

\begin{definition}
\label{d:most-general-def-asyn-ca}
Let $M$ be a subset of a group $G$  and let  $A$ be a set. Let $S = A^{A^M}$ be the set of all maps $A^M \to A$. Given $s \in S^G$, the NUCA $\sigma_s \colon A^G \to A^G$ is defined  by $
\sigma_s(x)(g)=  
    s(g)((g^{-1}x)  
	\vert_M)$ 
 for $x \in A^G$, $g \in G$. 
 \end{definition} 
\par 
The set $M$ is called a \emph{memory} and $s \in S^G$  the \textit{configuration of local transition maps} of $\sigma_s$.  Every CA is thus a NUCA with finite memory and constant configuration of local transition maps. 
The NUCA $\sigma_s$ is \emph{pre-injective} if $\sigma_s(x) = \sigma_s(y)$ implies $x= y$ whenever $x, y \in A^G$ are asymptotic.

\subsection{Main results} 
Our first result is a version of the Garden of Eden theorem for the class $\mathrm{NUCA}_c(G,A)$ consisting of NUCA with finite memory and with asymptotically constant configuration of local transition maps over the universe $G$ and the alphabet $A$. It states that the image of such an NUCA contains a nonempty open subset if and only if the NUCA is pre-injective when restricted to some nonempty open subset of the full shift.  
\par 
\begin{theoremletter}
\label{t:GOE-nuca-c-v1-intro} 
Let $G$ be an amenable group and let $A$ be a finite alphabet. Then for every $\tau \in \mathrm{NUCA}_c(G,A)$, the following are equivalent: 
\begin{enumerate}[\rm (i)] 
\item  $\tau\vert_U$ is pre-injective for some nonempty open subset $U\subset A^G$. 
    \item  $\im \tau$ contains a nonempty open subset of $A^G$. 
\end{enumerate}
\end{theoremletter}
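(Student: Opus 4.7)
The plan is to reduce Theorem~A to the classical Garden of Eden theorem for the uniform cellular automaton canonically associated with $\tau$. Given $\tau \in \mathrm{NUCA}_c(G,A)$, write $\tau = \sigma_s$ with finite memory $M \subset G$ and asymptotically constant $s \in S^G$; let $s_\infty \in S$ be the common asymptotic value and set $E := \{\,g \in G : s(g) \neq s_\infty\,\}$, a finite subset of $G$. Let $\tau' \colon A^G \to A^G$ denote the uniform CA with memory $M$ and constant local rule $s_\infty$, so that $\tau(x)(g) = \tau'(x)(g)$ for every $x \in A^G$ and every $g \in G \setminus E$. Since $G$ is amenable and $A$ is finite, the classical Garden of Eden theorem applies to $\tau'$, and it suffices to establish the two separate equivalences
\[
\text{(i)} \Longleftrightarrow \tau' \text{ is pre-injective}, \qquad \text{(ii)} \Longleftrightarrow \tau' \text{ is surjective};
\]
chaining through the classical GoE for $\tau'$ then gives (i)\,$\Leftrightarrow$\,(ii).

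For the direction (ii)\,$\Rightarrow$\,$\tau'$ surjective I would use a mean-entropy argument. If $\im \tau \supset [v]_K$ for some finite cylinder, then on every large F{\o}lner set $F_n$ the image $\im \tau$ realizes at least $|A|^{|F_n \setminus K|}$ distinct patterns, so $\im \tau$ has full mean entropy $\log|A|$. Because $\tau$ and $\tau'$ differ only at the $|E|$ coordinates of $E$, the same conclusion holds for $\im \tau'$, and the classical fact that a proper subshift of $A^G$ over an amenable group has strictly smaller mean entropy then forces $\im \tau' = A^G$. For the reverse direction I would introduce the set-valued fiber map $R \colon A^{G \setminus E} \to 2^{A^E}$ by
\[
R(y') := \bigl\{\, \bigl(s(g)((g^{-1}x)\vert_M)\bigr)_{g \in E} : x \in A^G,\ \tau'(x)\vert_{G \setminus E} = y' \,\bigr\}.
\]
Surjectivity of $\tau'$ makes $R(y')$ nonempty for every $y'$; a short compactness argument shows the graph of $R$ is closed in $A^{G \setminus E} \times A^E$, so each $C_w := \{\,y' : w \in R(y')\,\}$ is closed in $A^{G \setminus E}$. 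As $\{C_w\}_{w \in A^E}$ is a \emph{finite} closed cover of the compact Baire space $A^{G \setminus E}$, at least one $C_{v_E}$ has nonempty interior and thus contains a cylinder $[v']_{K \setminus E}$ in $A^{G\setminus E}$, which immediately lifts to an open cylinder $[v]_K \subset \im \tau$ with $v\vert_E = v_E$ and $v\vert_{K \setminus E} = v'$.

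The equivalence (i)\,$\Leftrightarrow$\,$\tau'$ pre-injective is handled by direct constructions that exploit the finiteness of $E$. In the direction $\tau'$ pre-injective\,$\Rightarrow$\,(i), I would take $U = [u]_F$ with $F \supset EM$ and any $u \in A^F$: for asymptotic $x,y \in U$ with $\tau(x) = \tau(y)$, the forced equality $x\vert_{EM} = y\vert_{EM}$ makes their difference set $D$ disjoint from $EM$, hence $DM^{-1} \cap E = \emptyset$, and this upgrades $\tau(x) = \tau(y)$ to the global equality $\tau'(x) = \tau'(y)$, yielding $x = y$ by pre-injectivity of $\tau'$. For the converse I would argue contrapositively: starting from asymptotic $x' \neq y'$ in $A^G$ with $\tau'(x') = \tau'(y')$ and finite difference set $D$, the shift-equivariance of $\tau'$ allows one to translate the pair so that $D$ becomes disjoint from the fixed finite set $FM^{-1}M \cup EM$. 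Pasting $u$ on $F$ with $x', y'$ on $G \setminus F$ then produces distinct asymptotic $x,y \in [u]_F$, and a direct computation based on the two disjointness conditions shows $\tau(x) = \tau(y)$, contradicting pre-injectivity of $\tau\vert_U$.

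The main technical difficulty throughout is the careful bookkeeping with the finite Minkowski products $EM$ and $FM^{-1}M$ that delimit where $\tau$ genuinely fails to be translation-invariant and where the memory $M$ can propagate a finite perturbation. The conceptually new ingredient beyond the classical argument is the Baire-style finite-cover step used for the surjective direction of the first equivalence, which converts pointwise nonemptiness of the fibers $R(y')$ into the existence of an open cylinder inside $\im \tau$; the case of finite $G$ is trivial since $A^G$ is then discrete.
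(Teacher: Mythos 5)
Your argument is correct, but it reaches Theorem~\ref{t:GOE-nuca-c-v1-intro} by a partly different route than the paper. The overlap: your entropy step (an open cylinder in $\im \tau$ forces full mean entropy, which is insensitive to the finitely many coordinates of $E$, and a proper closed subshift of $A^G$ has entropy $<\log|A|$) and your finite-closed-cover/Baire step (using closedness of $\im\tau$ to find a pattern $v_E$ on $E$ whose fiber set has nonempty interior) are essentially the paper's Lemma~\ref{l:entropy-open-max} and the second half of Lemma~\ref{l:GOE-nuca-c-2}, where the paper cuts along $EM^2$ instead of $E$. The genuine difference is the pre-injectivity direction: the paper interposes the condition $\overline{\ent}_\FF(\im\tau)=\log|A|$ (condition (b) of Theorem~\ref{t:GOE-nuca-c-v1}) and proves (i)$\implies$(b) via the quantitative Myhill-type counting argument of Theorem~\ref{t:GOE-nuca-c-1}, valid for arbitrary NUCA, and only then returns to (a) and (c) through the limiting CA and the classical Garden of Eden theorem; you instead prove the two transfers (i)$\iff$``$\tau'$ pre-injective'' directly, the forward one by translating a Garden-of-Eden pair so that its difference set $D$ avoids $FM^{-1}M\cup EM$ and pasting $u$ on $F$ (this checks out: any $g$ with $gM\cap D\neq\varnothing$ then lies outside $E$ and satisfies $gM\cap F=\varnothing$, so $\tau(x)(g)=\tau'(x')(g)=\tau'(y')(g)=\tau(y)(g)$, while all other $g$ are handled by $x\vert_{gM}=y\vert_{gM}$), and you invoke the classical theorem only once. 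Your route is more elementary and self-contained for that direction and spells out the step ``limiting CA pre-injective $\implies$ $\tau$ pre-injective on a cylinder'', which Lemma~\ref{l:GOE-nuca-c-2} asserts without detail; the paper's detour buys the stronger quantitative Theorem~\ref{t:GOE-nuca-c-1-intro} and the extra entropy characterization (b) as by-products. Two points you should make explicit: replace the open $U$ by a cylinder $[u]_F\subset U$ before translating, and note that the translation step uses that $G$ is infinite, which is harmless since you (like the paper) dispose of finite $G$ separately.
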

\par 
The next example shows that Theorem~\ref{t:GOE-nuca-c-v1-intro} is optimal in the sense that both the implications fail  under the general assumption that $\tau$ is an arbitrary NUCA with finite memory even when $\tau$ is injective or surjective. 
\begin{example}
 \label{ex:2} 
 Let $A=\{0,1\}$ and consider the NUCA $\tau \colon A^{\Z^2} \to A^{\Z^2}$ defined for every $x \in A^{\Z^2}$ by $\tau(x)(m,n)=x(m,n+1)$ if $n \leq m-1$, $\tau(x)(m,m)=x(m,m)$, and $\tau(x)(m,n)=x(m,n-1)$ if $n \geq m+1$. Then as in Example~\ref{ex:1}, we find that $\tau$ is injective but   
\[\im \tau = \{y \in A^{\Z^2} \colon y(m,m-1)=y(m,m)=y(m,m+1) \text{ for all }m \in \Z \}
\]
which implies that $\im \tau$ cannot contain any nonempty open subset of $A^{\Z^2}$. 
\par 
Similarly, consider the NUCA $\sigma \colon A^{\Z^2} \to A^{\Z^2}$ given by $\sigma(x)(m,n)=x(m,n-1)$ if $n \leq m-1$, $\sigma(x)(m,m)=x(m,m-1)+x(m,m)+x(m,m+1)$ (mod 2), and $\sigma(x)(m,n)=x(m,n+1)$ if $n \geq m+1$. It is not hard to see that $\sigma$ is surjective. However, for $m \in \Z$, we have $\sigma(z_{m})=\sigma(0^{\Z^2})=0^{\Z^2}$ where $z_m(m,n)=1$ if $n\in \{m, m-1\}$ and    $z_m(m,n)=0$ otherwise. In particular,  $\tau\vert_U$ cannot be pre-injective for any nonempty open subset $U \subset A^{\Z^2}$. 
\end{example}
\par 
To deal with arbitrary NUCA, we establish  the following extension of the Myhill property for NUCA which generalizes also the implication 
(i)$\implies$(ii) in Theorem~\ref{t:GOE-nuca-c-v1-intro}. 
In essence, the size of the image of an NUCA as measured by the upper natural $\FF$-entropy $\overline{\ent}_\FF$  (cf.~Section~\ref{s:banach-entropy}) increases linearly with the size, expressed as certain upper natural $\FF$-density $\overline{d}_\FF$ (cf. Section~\ref{s:banach-densities}),  of the subset on which the restriction of the NUCA is pre-injective. 
\par 
\begin{theoremletter}
\label{t:GOE-nuca-c-1-intro}
   Let $A$ be a finite alphabet and let $S$ be a proper subset of  an amenable group $G$.  Fix $p \in A^S$ and $U=\{p\} \times A^{G \setminus S}$. 
    Suppose that $\tau \colon A^G \to A^G$ is an NUCA with finite memory such that $\tau\vert_{U}$ is pre-injective. Then $\overline{\ent}_\FF (\im \tau\vert_{U} ) \geq (1 - \overline{d}_\FF(S)) \log |A|$ for every F\o lner net $\FF$ of $G$. 
\end{theoremletter}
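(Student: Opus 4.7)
The plan is to adapt the classical mutually-erasable-patterns argument underlying the Myhill implication of the Garden of Eden theorem, restricting all perturbations to the subset $U$. Concretely, for each F\o lner set $F \in \FF$, I will exhibit a family of $|A|^{|F^- \setminus S|}$ configurations inside $U$ on which the composition $\pi_F \circ \tau$ is injective, where $F^-$ is a suitable inner thickening of $F$; this directly lower-bounds the number of $F$-patterns in $\im \tau|_U$.

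Fix a finite memory $M \subset G$ of $\tau$ and a reference configuration $\bar x \in U$ (take $\bar x = p$ on $S$ and equal to some fixed $a_0 \in A$ on $G \setminus S$). For each $F \in \FF$, set
\[
F^- := \{g \in F : g M^{-1} \subset F\}.
\]
The crucial property of $F^-$ is that whenever $x, y \in A^G$ differ only on a subset $D \subseteq F^-$, their $\tau$-images agree on $G \setminus F$; indeed $\tau(\cdot)(g)$ depends only on coordinates in $gM$, so $\tau(x)(g) \neq \tau(y)(g)$ forces $g \in DM^{-1} \subseteq F^- M^{-1} \subseteq F$. The F\o lner condition applied to the finite set $M^{-1}$ yields $|F^-|/|F| \to 1$ along $\FF$. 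Next, define the slice
\[
V_F := \{ x \in U : x|_{G \setminus (F^- \setminus S)} = \bar x|_{G \setminus (F^- \setminus S)} \},
\]
whose elements are parametrized freely by the values on $F^- \setminus S$ (the constraint $x|_S = p$ from membership in $U$ is automatically enforced by $\bar x$), so $|V_F| = |A|^{|F^- \setminus S|}$.

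Any two distinct $x, y \in V_F$ are asymptotic and both lie in $U$, so pre-injectivity of $\tau|_U$ gives $\tau(x) \neq \tau(y)$; since $x,y$ differ only inside $F^-$, the observation above forces $\tau(x)|_F \neq \tau(y)|_F$. Hence $|\pi_F(\im \tau|_U)| \ge |V_F| = |A|^{|F^- \setminus S|}$, and using $|F^- \setminus S| \ge |F^-| - |F \cap S|$ we obtain
\[
\frac{1}{|F|} \log |\pi_F(\im \tau|_U)| \ge \left( \frac{|F^-|}{|F|} - \frac{|F \cap S|}{|F|} \right) \log |A|.
\]
Passing to $\limsup$ along $\FF$, with $|F^-|/|F| \to 1$ and $\liminf |F \cap S|/|F| \le \limsup |F \cap S|/|F| = \overline{d}_\FF(S)$, gives the claimed inequality $\overline{\ent}_\FF(\im \tau|_U) \ge (1 - \overline{d}_\FF(S)) \log |A|$.

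The main point of care is the choice of inner thickening $F^-$ which has to fulfill a dual role simultaneously: perturbations inside $F^- \setminus S$ must remain inside $U$ (handled by subtracting $S$) and must be invisible to $\tau$ outside $F$ (handled by the condition $gM^{-1} \subseteq F$). A secondary subtlety is the $\limsup$/$\liminf$ swap in the final step, which is legitimate only because $|F^-|/|F|$ has a genuine limit equal to $1$ and can therefore be pulled through $\limsup$.
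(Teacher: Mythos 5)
Your argument is correct and is essentially the paper's: both run the classical Myhill counting argument with perturbations confined to a F\o lner set minus $S$ (so that the perturbed configurations stay inside $U$ and are mutually asymptotic), and then compare the $|A|^{|F\setminus S|}$-many such configurations with the number of patterns of $\im \tau\vert_U$ on a set comparable to $F$. The only cosmetic difference is that you count directly using the $M$-interior $F^{-}$ (which in fact yields the slightly stronger bound $(1-\underline{d}_\FF(S))\log|A|$), whereas the paper argues by contradiction using the $M$-neighbourhood $F_kM$, the boundary estimate $\Gamma_{F_kM}\subset \Gamma_{F_k}\times A^{\partial_M F_k}$, and a pigeonhole step.
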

\par 
We deduce the classical Myhill property for CA from Theorem~\ref{t:GOE-nuca-c-1-intro} as follows.  If $S=\varnothing$ then $\overline{d}_\FF(S)=0$ and $\tau$ is pre-injective. If in addition  $\tau$ is a CA so that $\im \tau$ is a closed subshift, then by Theorem~\ref{t:GOE-nuca-c-1-intro}, $\overline{\ent}_\FF(\im \tau) =\log|A|$ is maximal thus $\im \tau = A^G$ by \cite[Proposition~4.2]{csc-myhill-strongly-irreducible}, i.e., $\tau$ is surjective. 
\par 
The next example  implies that the converse of Theorem~\ref{t:GOE-nuca-c-1-intro} does not hold. 
\begin{example}
    \label{ex:3}
    Let $\tau \colon \{0,1,2\}^\Z \to \{0,1,2\}^\Z$ be an NUCA given by $\tau(x)(n)=x(n)$ if $x(n)\neq 2$ and $\tau(x)(n)=0$ otherwise. Then $\im \tau = \{0,1\}^\Z$  and $\overline{\ent}_\FF (\im \tau )= \log 2 >0$. However, $\tau$ is not pre-injective when restricted to any subset of the form $\{p\}\times \{0,1,2\}^{\Z \setminus S}$ where $S \subsetneq \Z$ and $p \in \{0,1,2\}^S$. 
\end{example} 
\par 
When the alphabet $A$ is a vector space, the full shift $A^G$ is naturally a vector space with component-wise operations and an NUCA $\tau \colon A^G \to A^G$ is \emph{linear} if it is a linear map, or equivalently,  if its local transition maps are linear. 
 Such linear NUCA with finite memory are interesting dynamical objects as  they satisfy the shadowing property \cite{phung-shadowing}, \cite{phung-dual-linear-nuca}.  
\par 
Our next result is a quantitative version of the Garden of Eden theorem for linear NUCA which  extends  \cite{csc-goe-linear} for linear CA and relates the partial pre-injectivity of a linear NUCA as measured by  $\overline{d}_\FF$ or  the upper Banach density $\overline{D}_G$ (Section~\ref{s:banach-densities}) with the upper $\FF$-natural and lower Banach mean dimensions $\overline{\mdim}_\FF$,  $\underline{\mDim}$ (Section~\ref{s:banach-mean-dim})  of the image of the NUCA.  

\par 
\begin{theoremletter}
\label{t:characterization-linera-GOE-nuca-mdim-pre-intro}
Let $G$ be an infinite countable amenable group and let $A$ be a finite-dimensional  vector space alphabet. Then for all linear NUCA  $\tau \colon A^G \to A^G$ with finite memory, the following hold:   
\begin{enumerate} [\rm (i)]
\item If $\tau\vert_U$ is pre-injective where $U=\{0\}^S \times A^{G \setminus S}$ for some $S \subset G$,  then  $\overline{\mdim}_\FF (\im \tau ) \geq (1-\overline{d}_\FF(S))\dim A$ for every F\o lner sequence $\FF$ of $G$. 
    \item 
    If $\dim A=1$ and $\underline{\mDim} (\im \tau ) > 1-d$, then there exists $S\subset G$ with $\overline{D}_G(S)\leq d$  such that  $\tau\vert_U$ is pre-injective where $U=\{0\}^{S} \times A^{G \setminus S}$. 
\end{enumerate}
\end{theoremletter}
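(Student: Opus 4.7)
My plan is to exploit linearity of $\tau$ in both parts to reduce to a statement about $K := \ker \tau \cap A^G_c$, the space of finitely-supported elements of the kernel. The basic observation is that $\tau|_U$ is pre-injective for $U = \{0\}^S \times A^{G\setminus S}$ if and only if $K \cap A^{G \setminus S}_c = \{0\}$: for asymptotic $x, y \in U$ with $\tau(x)=\tau(y)$, the difference $z := x - y$ lies in $K$ with $\supp z \subseteq G\setminus S$, and conversely any such $z$ witnesses a failure of pre-injectivity via $x=z,\, y=0$.

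For part (i), I fix a finite memory $M$ of $\tau$ and introduce the $M$-interior $F^\circ := \{g\in F : gM \subseteq F\}$. Any $z$ with $\supp z \subseteq F^\circ \setminus S$ has $\tau(z)$ supported in $F$, so by the reformulation above the linear map $z \mapsto \pi_F(\tau(z))$ is injective on a space of dimension $|F^\circ \setminus S|\cdot\dim A$, giving
\[
\dim \pi_F(\im\tau) \;\geq\; (|F \setminus S| - |F \setminus F^\circ|)\dim A.
\]
Specializing to $F = F_n$ in the Følner sequence $\FF$, dividing by $|F_n|$, and using $|F_n \setminus F_n^\circ|/|F_n| \to 0$, taking $\limsup$ yields $\overline{\mdim}_\FF(\im\tau) \geq (1 - \underline{d}_\FF(S)) \dim A \geq (1 - \overline{d}_\FF(S))\dim A$.

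For part (ii), I first translate the hypothesis on $\im \tau$ into a bound on $K$. Writing $K_E := K \cap A^E_c$, a rank-nullity argument for the finite-memory map $\pi_F \circ \tau$ factored through $A^{FM}$, combined with the inclusion $K_{FM} \subseteq \ker \tau^F$, gives $\dim K_F \leq |F| - \dim \pi_F(\im \tau) + o(|F|)$ along any Følner sequence, so $\underline{\mDim}(\im \tau) > 1 - d$ delivers $\limsup_n \dim K_{F_n}/|F_n| < d$ uniformly in the Følner sequence. Then I construct $S$ inductively along a Følner exhaustion $G_1 \subseteq G_2 \subseteq \cdots$ of $G$: given a pivot set $S_{n-1} \subseteq G_{n-1}$ of size $\dim K_{G_{n-1}}$ separating $K_{G_{n-1}}$, the subspace $K'_n := \{z \in K_{G_n} : z|_{S_{n-1}} = 0\}$ has trivial intersection with $A^{G_{n-1}}_c$ (by the separating property of $S_{n-1}$), hence the projection $K'_n \to A^{G_n \setminus G_{n-1}}_c$ is injective and a separating set $T_n \subseteq G_n \setminus G_{n-1}$ for $K'_n$ can be chosen by row-echelon reduction. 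Setting $S_n := S_{n-1} \sqcup T_n$ and using $\dim K'_n = \dim K_{G_n} - \dim K_{G_{n-1}}$ (since $S_{n-1}$ separating forces $\pi_{S_{n-1}} : K_{G_n} \to A^{S_{n-1}}$ to be surjective), the resulting $S := \bigcup_n S_n$ satisfies $|S \cap G_n| = \dim K_{G_n}$ and separates all of $K$.

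The main obstacle is to promote the density estimate $\overline{d}_{(G_n)}(S) \leq d$ obtained along this single exhaustion to the full Banach-density bound $\overline{D}_G(S) \leq d$ required by the conclusion. My plan here is to replace the single exhaustion by an Ornstein--Weiss quasi-tiling of $G$: for each $\epsilon > 0$, quasi-tile $G$ by finitely many Følner tiles on each of which the uniform bound $\dim K_T/|T| < d + \epsilon$ holds, perform the pivot construction simultaneously on all tiles, and absorb the boundary/overlap contribution of the quasi-tiling into an $o(1)$ term so that every sufficiently invariant Følner set sees $S$ with density at most $d + O(\epsilon)$. A diagonal argument over $\epsilon \to 0$ combined with sequential compactness in $\{0,1\}^G$ then yields a single $S$ with the desired Banach density and separating properties, completing the proof.
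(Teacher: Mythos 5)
Part (i) of your proposal is correct and takes a mildly different route from the paper: you argue directly with the finitely supported kernel and a dimension count on configurations supported in $F_n^{-M}\setminus S$, whereas the paper adapts its entropy-counting proof of Theorem~\ref{t:GOE-nuca-c-1} (a contradiction via the inclusion $\Gamma_{F_iM}\subset\Gamma_{F_i}\times A^{\partial_MF_i}$ and a pigeonhole on a finite-dimensional space of asymptotic configurations). Your version is a clean linear-algebra reformulation and even yields the slightly stronger bound $(1-\underline{d}_\FF(S))\dim A$; no objection there.

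Part (ii) is where there is a genuine gap, and it sits exactly at the step you flag as ``the main obstacle.'' Your nested-exhaustion pivot construction does produce a separating set $S$, but only because the sets $G_n$ are nested, so every finitely supported kernel element eventually lies inside some $G_n$ and is killed by the pivots $S_n$. When you replace the exhaustion by an Ornstein--Weiss quasi-tiling, the tiles are only $\varepsilon$-disjoint and cover only a set of lower Banach density $1-\varepsilon$, and a kernel element whose support straddles several tiles (or meets overlaps or the uncovered region) is not supported in any single tile. Its restriction to a tile core is then not an element of $K_T$ (a global kernel element supported in $T$) but only of the kernel of the induced \emph{local} map $\varphi_i\colon\{0\}^{G\setminus E_iM}\times A^{E_iM}\to A^{E_i}$, which is in general strictly larger than $K_{E_iM}$. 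So choosing pivots per tile that merely separate $K_T$, with the per-tile bound $\dim K_T/|T|<d+\epsilon$, does not give a globally separating $S$: nothing forces the straddling element to vanish tile by tile. This is precisely why the paper proves the refined quasi-tiling Lemma~\ref{l:ornstein-weiss} with $T_i^o=T_i^{-M}$, takes the ``free'' coordinates inside pairwise \emph{disjoint} memory-closed cores $E_iM\subset T_i^o$ (so that $\tau(z)\vert_{E_i}$ depends only on $z\vert_{E_iM}$ and the tiles decouple), and chooses the pivots $S_i$ to make $\varphi_i$ injective on $\{0\}^{S_i}\times A^{E_iM\setminus S_i}$, bounding $\dim\Ker\varphi_i$ by rank--nullity from the translate-uniform estimate $\dim\Gamma_{gF^{-M^2}}\geq(\tfrac{\varepsilon}{2}+1-d)|F|$ supplied by $\underline{\mDim}$ --- not from bounds on $K_T$. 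In addition, your concluding diagonal/compactness step over $\epsilon\to0$ is both unjustified as stated and unnecessary: the separating property does pass to pointwise limits of sets, but the upper Banach density bound does not without extra uniformity; and no limit is needed, since the strict slack $\varepsilon=\underline{\mDim}(\im\tau)-(1-d)>0$ allows one to fix a single quasi-tiling scale (all parameters of order $\varepsilon$) and obtain $\overline{D}_G(S)\leq d$ in one stroke from the covering estimate of Lemma~\ref{l:technical-lem-density}, as in the proofs of Theorems~\ref{t:linera-GOE-nuca-mdim-pre} and~\ref{t:characterization-linear-GOE-nuca-banach-dim}. As written, your plan for (ii) is a sketch whose per-tile object and limiting argument would need to be replaced by these ingredients to become a proof.
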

\par 
In particular, for linear NUCA with finite memory, Theorem~\ref{t:GOE-nuca-c-1-intro}  remains valid by replacing the mean entropy with $\overline{\mdim}_\FF$.
\par 
Fix a group $G$ and a finite  alphabet $A$, a \emph{subshift} is any $G$-invariant subset of $A^G$. We say that a (closed) subshift $X$ is of \emph{finite type} if for some finite subsets $D \subset G$ (called the \emph{window}) and $\PP \subset A^D$, the subshift $X$ consists of all $x \in A^G$ such that  $(gx)\vert_D \in \PP$ for all $g \in G$. A subshift $X \subset A^G$ is \emph{strongly irreducible} (see~\cite{burton}, \cite{ward}, \cite{lightwood}, \cite{fiorenzi}) if it is \emph{$\Delta$-irreducible} for some finite subset $\Delta \subset G$ which means that for all  $x, y\in X$ and finite subsets $S, T\subset G$ with $S\Delta \cap T=\varnothing$, we can find $z \in X$ such that  $z\vert_S=x\vert_S$ and $z\vert_T=y\vert_T$.  
\par 
For multi-dimensional CA over strongly irreducible subshifts of finite type, we obtain the following generalization of the surjunctivity property which extends notably \cite[Theorem~1.1]{csc-myhill-strongly-irreducible} since we only assume that $\tau$ is injective when restricted to a nonempty open subset of the subshift. 
\par 
\begin{theoremletter}
\label{t:finite-z-d-almost-general-closed-subset-dense-periodic-intro}
 Let $G=\Z^d$, $d \geq 1$, and let $A$ be a finite alphabet. Let $X \subset A^G$ be 
    strongly irreducible closed subshift of finite type which contains a periodic configuration.  
    Suppose that $\tau \colon X \to X$ is a  CA such that $\tau\vert_{U}$ is injective where $U \subset X$ is a nonempty open subset. Then $\tau$ is surjective and pre-injective. 
\end{theoremletter}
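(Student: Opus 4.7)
My plan is to reduce to the classical Myhill-type theorem for strongly irreducible SFTs containing a periodic configuration, namely \cite[Theorem~1.1]{csc-myhill-strongly-irreducible}, which asserts that a cellular automaton $\tau\colon X\to X$ on such an $X$ is surjective if and only if it is pre-injective. Thus it suffices to prove $\tau$ pre-injective, since both conclusions then follow. I would argue by contradiction: if $\tau$ is not pre-injective, pick distinct asymptotic $x,y\in X$ with $\tau(x)=\tau(y)$ and nonempty finite difference set $E:=\{g\in G:x(g)\neq y(g)\}$. Since $U$ is open in $X$, it contains a cylinder $[p]_F:=\{z\in X:z|_F=p\}$ for some finite $F\subset G$ and some $p\in A^F$ extending to $X$. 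The idea is to transplant the failure of injectivity witnessed by $(x,y)$ into $[p]_F$, contradicting injectivity of $\tau|_U$.

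Let $M\subset G$ be a memory of $\tau$, $D$ a window of the SFT $X$, and $\Delta$ a finite strong-irreducibility constant of $X$; set $L:=(D-D)\cup(M-M)\cup\{0_G\}$ and $E^*:=E+L$, all finite. By $G$-equivariance, for any $v\in G$ the translates $v\cdot x,\,v\cdot y$ are asymptotic with difference set $v+E$ and satisfy $\tau(v\cdot x)=\tau(v\cdot y)$. Since $G=\Z^d$ is infinite, choose $v$ with $F\Delta\cap(v+E^*)=\varnothing$. On the annulus $v+E^*\setminus(v+E)$ the configurations $v\cdot x$ and $v\cdot y$ agree, because $x=y$ off $E$; strong irreducibility then produces $w\in X$ with $w|_F=p$ and $w$ equal to this common pattern on $v+E^*\setminus(v+E)$. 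Define $\tilde x,\tilde y\in A^G$ by $\tilde x|_{v+E}:=(v\cdot x)|_{v+E}$, $\tilde x|_{(v+E)^c}:=w|_{(v+E)^c}$, and analogously $\tilde y$ using $v\cdot y$.

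The verification now splits into three parts. First, the inclusion $D-D\subset L$ forces every $D$-window meeting $v+E$ to lie inside $v+E^*$, on which $\tilde x$ coincides with the admissible pattern $(v\cdot x)|_{v+E^*}$, so $\tilde x\in X$; likewise $\tilde y\in X$. Second, $F\cap(v+E^*)=\varnothing$ gives $\tilde x|_F=\tilde y|_F=w|_F=p$, so $\tilde x,\tilde y\in[p]_F\subset U$, and they differ on $v+E$ since $x\neq y$ on $E$. Third, $M-M\subset L$ forces every $M$-window meeting $v+E$ into $v+E^*$, where $\tau(\tilde x)(g)=\tau(v\cdot x)(g)=\tau(v\cdot y)(g)=\tau(\tilde y)(g)$, while $\tilde x=\tilde y$ off that region; hence $\tau(\tilde x)=\tau(\tilde y)$. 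This contradicts injectivity of $\tau|_U$. The main technical delicacy lies in packaging three independent buffer requirements into a single set $L$---$D-D$ for SFT admissibility, $M-M$ for matching $\tau$-values, and $\Delta$ for the strong-irreducibility splicing far from $F$---after which the rest is a direct verification; the periodicity hypothesis on $X$ is only invoked at the final appeal to \cite[Theorem~1.1]{csc-myhill-strongly-irreducible}.
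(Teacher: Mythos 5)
Your proposal is correct, but it takes a genuinely different route from the paper's. You turn the hypothesis directly into full pre-injectivity of $\tau$: given asymptotic $x\neq y$ with $\tau(x)=\tau(y)$, you translate the finite defect far from a cylinder contained in $U$ and use strong irreducibility to splice the translated defect into two distinct configurations of that cylinder with equal image, contradicting injectivity of $\tau\vert_U$; surjectivity is then outsourced to the Myhill property for strongly irreducible subshifts over amenable groups. The paper instead keeps the ``locally injective'' hypothesis and runs a counting argument: using the periodic-approximation result (Theorem~\ref{t:density-periodic-general}, which is where $G=\Z^d$ and the periodic configuration are genuinely used), it shows that the $H_n$-periodic configurations lying in the cylinder already realize every pattern of $X$ on $F_n\setminus S\Delta$, deduces that $\im\tau$ has maximal mean entropy, gets surjectivity from \cite[Proposition~4.2]{csc-myhill-strongly-irreducible}, and only then obtains pre-injectivity from the Garden of Eden theorem of \cite{fiorenzi}. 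Your route buys generality: the splicing step uses neither the group $\Z^d$ nor the periodic configuration, so modulo the quoted Myhill theorem the argument works over any amenable group universe; note also that you only need the pre-injective implies surjective direction, so attributing the full equivalence to \cite[Theorem~1.1]{csc-myhill-strongly-irreducible} is more than required (the Moore direction over $\Z^d$ is Fiorenzi's theorem, as the paper cites). The paper's route, in exchange, produces a quantitative byproduct (maximality of the entropy of the image), consistent with its other results. Two harmless details to record in a final write-up: enlarge $\Delta$ so that $1_G\in\Delta$, which gives $F\subset F\Delta$ and hence $F\cap(v+E^*)=\varnothing$, and observe that $p\in X_F$ because the cylinder in $U$ is nonempty, which is exactly what the strong-irreducibility splicing needs.
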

\par 
The above result belongs to the rich literature of the Gottschalk surjunctivity conjecture which says that every injective cellular automata with finite alphabet over the full shift must be injective (cf. \cite{gottschalk}). While the conjecture remains open, it holds for the wide class of sofic groups by a result of Gromov (cf.~\cite{gromov-esav}). Several extensions were obtained for sofic group universes but more general alphabets such as finite-dimensional vector spaces (cf.~\cite{csc-sofic-linear}) and algebraic varieties (cf.~\cite{phung-geometric}). 

\subsection{Organization of the paper} 

In Section~\ref{s:pre}, we collect preliminary definitions and basic results concerning F\o lner nets and quasi-tilings of amenable groups as well as various notions of natural and Banach mean entropies and mean dimensions of subsets of the full shift. Then in Section~\ref{s:almost-injective}, we discuss the almost injectivity property  and present the proof of Theorem~\ref{t:finite-z-d-almost-general-closed-subset-dense-periodic-intro} (see Theorem~\ref{t:finite-z-d-almost-general-closed-subset-dense-periodic}).   Section~\ref{s:myhill} contains   the proofs of Theorem~\ref{t:GOE-nuca-c-v1-intro} (see Theorem~\ref{t:GOE-nuca-c-v1}) and Theorem~\ref{t:GOE-nuca-c-1-intro} (see Theorem~\ref{t:GOE-nuca-c-1}). 
Finally, the proof of the Garden of Eden theorem for linear NUCA (Theorem~\ref{t:characterization-linera-GOE-nuca-mdim-pre-intro}) is given in Section~\ref{s:GOE-linear-NUCA} (see Theorem~\ref{t:characterization-linera-GOE-nuca-mdim-pre}). In particular, we  obtain a stronger version (see Theorem~\ref{t:linera-GOE-nuca-mdim-pre}) of  Theorem~\ref{t:characterization-linera-GOE-nuca-mdim-pre-intro}.(i) when the configuration of local transition maps of $\tau$ differs from a constant configuration only outside a subset of zero density.

\section{Preliminaries} 
\label{s:pre}

\subsection{Amenable groups and F\o lner nets}

Amenable groups were introduced by von Neumann in \cite{neumann-amenable}. 
A group $G$ is \emph{amenable} if it admits a \emph{F\o lner net}, i.e., a family $(F_i)_{i \in I}$ over a directed set $I$ consisting of nonempty finite subsets of $G$ such that (cf. \cite{folner})
\begin{equation}
\label{e:folner-s}
\lim_{i \in I} \frac{\vert F_i \setminus  F_i g\vert}{\vert F_i \vert} = 0 \text{  for all } g \in G. 
\end{equation}
\par 
Examples of amenable groups include finitely generated groups of subexponential growth and  solvable groups. However, all groups containing a subgroup isomorphic to a free group of rank 2 are non-amenable. See e.g. \cite{stan-amenable} for some more details.
\subsection{Boundary and interior sets} 
Given any subsets $F, M$ of a group $G$, we define    the \emph{$M$-boundary} of $F$ by $\partial_{M} F= FM \setminus F^{-M}$ where $F^{-M} = \{g \in F\colon gM \subset F\}$ is called the \emph{$M$-interior} subset of $F$.

\subsection{Natural and Banach densities}
\label{s:banach-densities}

\begin{definition}
    Let $G$ be an amenable group with a F\o lner net $\FF=(F_i)_{i \in I}$. For every subset $S \subset G$, let us define respectively 
  \[
  \overline{d}_\FF(S) = \limsup_{i \in I } \frac{|S \cap F_i|}{|F_i|}, \quad \underline{d}_\FF(S) = \liminf_{ i \in I} \frac{|S \cap F_i|}{|F_i|}
  \]
  as the \emph{upper natural $\FF$-density} and the \emph{lower natural $\FF$-density} of $S$ in $G$. 
  \par 
  Similarly, we define respectively 
  \[
  \overline{D}_G(S) = \liminf_{i \in I} \sup_{g \in G} \frac{|S \cap F_ig|}{|F_i|}, \quad \underline{D}_G(S) = \limsup_{i \in I} \inf_{g \in G} \frac{|S \cap F_ig|}{|F_i|}
   \] 
  as the  \emph{upper Banach density}  and the  \emph{lower Banach density} of $S$ in $G$. 
\end{definition}
 
\par 
On contrary to upper and lower natural $\FF$-densities, the following standard lemma (\cite[Lemma~2.9]{tiling-amenable}) shows that the Banach densities do not depend on the choice of the F\o lner net. 
\begin{lemma}
     Let $G$ be an amenable group with a F\o lner net $\FF=(F_i)_{i \in I}$. For every subset $S \subset G$, we have: 
\[
 \overline{D}_G(S) = \lim_{i \in I} \sup_{g \in G} \frac{|S \cap F_ig|}{|F_i|} = \inf_{F \subset G, |F|<\infty}  \sup_{g \in G} \frac{|S \cap Fg|}{|F|},
 \]
\[
 \underline{D}_G(S) = \lim_{i \in I } \sup_{g \in G} \frac{|S \cap F_ig|}{|F_i|} = \sup_{F \subset G, |F|<\infty}  \inf_{g \in G} \frac{|S \cap Fg|}{|F|}. 
 \]
\end{lemma}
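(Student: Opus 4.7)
The plan is to prove the two identities for $\overline{D}_G(S)$; those for $\underline{D}_G(S)$ follow by the completely symmetric argument, replacing each $\sup$ by $\inf$ and reversing the direction of one elementary inequality. For each finite $F \subset G$, write $D(F) := \sup_{g \in G} |S \cap Fg|/|F|$; the goal is to show that $D(F_i) \to \inf_F D(F)$ along $\FF$.

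One direction is immediate: since each $F_i$ is itself a finite subset of $G$, we have $\liminf_i D(F_i) \geq \inf_F D(F)$, and this will automatically upgrade to equality of $\liminf$ and $\limsup$ once the reverse inequality is established. The core step is therefore to prove that
\[
\limsup_{i \in I} D(F_i) \;\leq\; D(F) \qquad \text{for every finite } F \subset G.
\]
For this I would exploit the double-counting identity
\[
\sum_{k \in F_i} |S \cap F k g| \;=\; \sum_{f \in F} |S \cap f F_i g|,
\]
valid for every $g \in G$. The left-hand side is at most $|F_i|\,|F|\,D(F)$, since $|S \cap Fkg| \leq |F|\,D(F)$ for every $k, g$ by definition of $D(F)$. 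Each summand on the right-hand side satisfies $|S \cap fF_i g| \geq |S \cap F_i g| - |F_i \setminus fF_i|$, by separating $F_i g$ and $fF_i g$ into their common part and the parts removed or added. Summing over $f \in F$, combining, and dividing by $|F|\,|F_i|$ yields
\[
\frac{|S \cap F_i g|}{|F_i|} \;\leq\; D(F) \;+\; \frac{1}{|F|}\sum_{f \in F} \frac{|F_i \setminus fF_i|}{|F_i|}.
\]
Taking $\sup_g$ and then $\limsup_i$, the error term vanishes by the F\o lner property, yielding $\limsup_i D(F_i) \leq D(F)$; taking the infimum over $F$ then gives the desired identity.

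The main technical subtlety is that the error term involves left translates $fF_i$, whereas the F\o lner condition as defined in \eqref{e:folner-s} concerns right translates $F_i g$. This is reconciled by invoking the standard fact that every amenable group admits two-sided F\o lner nets; one may pass to such a refinement of $\FF$ without affecting $\liminf_i D(F_i)$, and once the displayed inequality $\limsup_i D(F_i) \leq D(F)$ is established along the refinement for every finite $F$, the limit along the original net $\FF$ exists and coincides with $\inf_F D(F)$. The argument for $\underline{D}_G(S)$ proceeds along the same lines, using $|S \cap fF_i g| \leq |S \cap F_i g| + |fF_i \setminus F_i|$ in place of its reverse and infima in place of suprema throughout.
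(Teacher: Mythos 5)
The paper offers no proof of this lemma (it is quoted from the reference as a standard fact), so you are being compared with the standard argument, and your core computation is exactly that argument: the double-counting identity $\sum_{k\in F_i}|S\cap Fkg|=\sum_{f\in F}|S\cap fF_ig|$, the bounds $|S\cap Fkg|\le |F|\,D(F)$ and $|S\cap fF_ig|\ge |S\cap F_ig|-|F_i\setminus fF_i|$, the trivial inequality $\liminf_i D(F_i)\ge \inf_F D(F)$, and the sandwich argument are all correct, and the lower-density case is indeed symmetric.

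The genuine gap is in how you dispose of the left/right mismatch, which is the only delicate point. Your error term $|F_i\setminus fF_i|/|F_i|$ asks for \emph{left} almost-invariance of $F_i$, while \eqref{e:folner-s} is a right-handed condition; for an individual net these are not equivalent (they are when $F_i=F_i^{-1}$ or $G$ is abelian, which is all the paper actually uses later, but the lemma is stated for an arbitrary F\o lner net). Your proposed repair fails on two counts. First, "passing to a two-sided refinement of $\FF$" is not available: $G$ does admit two-sided F\o lner nets, but a given right-F\o lner net need not have any subnet that is left F\o lner -- the left defect $|F_i\setminus fF_i|/|F_i|$ for a fixed $f$ can stay bounded away from $0$ along the whole net. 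Second, and more importantly, even if you establish $\lim D=\inf_F D(F)$ along some other, two-sided net, that does not give convergence of $D(F_i)$ along the original net $\FF$, which is precisely the content of the lemma (independence of the chosen F\o lner net); the needed bound $\limsup_i D(F_i)\le D(F)$ must be proved along $\FF$ itself, so your final sentence of the upper-density argument is a non sequitur. Note also that the obvious inversion $S\mapsto S^{-1}$, $F\mapsto F^{-1}$, $F_i\mapsto F_i^{-1}$ merely relabels the problem and again demands left invariance of the original $F_i$. As written, your proof establishes the lemma for nets that are also left F\o lner (in particular for symmetric nets and for $G=\Z^d$, which covers every use in this paper); to prove the statement as printed you would either have to control $|F_i\setminus fF_i|$ from \eqref{e:folner-s}, which is impossible in general, or observe that the translates in the definition of the densities must be taken on the side matching \eqref{e:folner-s}, as in the cited reference. (Separately, the $\sup_g$ in the displayed formula for $\underline{D}_G(S)$ is evidently a typo for $\inf_g$; your sketch proves the intended version.)
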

\par 
We also have the following basic  properties.  

\begin{lemma}
\label{l:chain-densities}
 Let $S, T $ be subsets of a countable amenable group $G$ with a F\o lner net $\FF=(F_i)_{i \in I}$. Then 
 \[
 0\leq \underline{D}_G(S) \leq \underline{d}_\FF(S)\leq \overline{d}_\FF(S) \leq \overline{D}_G(S)\leq 1.
 \]
 Moreover, we have: 
  \begin{enumerate}[\rm (i)]
        \item
    $\overline{d}_\FF(S\cup T) \leq \overline{d}_\FF(S)+ \overline{d}_\FF(T)$ and $\overline{D}_G(S\cup T) \leq \overline{D}_G(S)+ \overline{D}_G(T)$
    \item $\overline{d}_\FF(S) + \underline{d}_\FF(G\setminus S)=1$ and $\overline{D}_G(S) + \underline{D}_G(G\setminus S)=1$
        \end{enumerate}
\end{lemma}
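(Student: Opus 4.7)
The plan is to prove the chain of inequalities by pointwise comparison of the defining quantities followed by passage to appropriate limits, and to derive (i) and (ii) from elementary set-theoretic identities combined with standard $\limsup$/$\liminf$ manipulations. The preceding lemma, which rewrites $\overline{D}_G(S)$ and $\underline{D}_G(S)$ as honest limits (of $\sup_g|S\cap F_ig|/|F_i|$ and $\inf_g|S\cap F_ig|/|F_i|$ respectively) rather than mere $\limsup$/$\liminf$, will do most of the work.

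For the chain, the bounds $\underline{D}_G(S)\ge 0$ and $\overline{D}_G(S)\le 1$ are immediate from $0\le |S\cap F_ig|\le |F_ig|=|F_i|$. To see $\underline{D}_G(S)\le \underline{d}_\FF(S)$, I would specialize to $g=e_G$ inside the infimum: this yields $\inf_{g\in G}|S\cap F_ig|/|F_i|\le |S\cap F_i|/|F_i|$ for every index $i$, and taking the limit on the left and $\liminf$ on the right gives the inequality. The middle bound $\underline{d}_\FF(S)\le \overline{d}_\FF(S)$ is just $\liminf\le \limsup$, and the symmetric argument at $g=e_G$ combined with the limit formula for $\overline{D}_G(S)$ yields $\overline{d}_\FF(S)\le \overline{D}_G(S)$.

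For part (i), the bound $|(S\cup T)\cap F|\le |S\cap F|+|T\cap F|$ holds for every finite set $F\subset G$. Dividing by $|F_i|$ and invoking subadditivity of $\limsup$ gives the statement for $\overline{d}_\FF$. Applying the same bound to $F=F_ig$, using subadditivity of the supremum in $g$, and then passing to the limit handles $\overline{D}_G$.

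For part (ii), the identity $|S\cap F|+|(G\setminus S)\cap F|=|F|$ at $F=F_i$ produces complementary ratios summing to $1$, so the standard fact $\limsup a_i+\liminf(1-a_i)=1$ yields the equality for the natural densities. For the Banach counterpart, the same identity applied to $F=F_ig$ gives $\inf_g|(G\setminus S)\cap F_ig|/|F_i|=1-\sup_g|S\cap F_ig|/|F_i|$ for every $i$, and passing to the limit (which exists by the preceding lemma) gives the result. No real obstacle arises; the only technical point worth mentioning is that $\FF$ is a net rather than a sequence, but the $\limsup$/$\liminf$ calculus over directed sets transfers without change.
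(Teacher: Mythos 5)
Your proof is correct; the paper actually states this lemma as a basic fact and gives no proof at all, and your elementary argument (pointwise comparison at $g=e_G$, subadditivity of $\limsup$, and the identity $\inf_g(1-x_g)=1-\sup_g x_g$) is exactly the standard one it implicitly relies on. In particular, you rightly isolate the one genuinely needed ingredient: the preceding lemma's assertion that the Banach-density expressions are honest limits, without which the mixed $\limsup$/$\liminf$ comparisons in the chain $\underline{D}_G(S)\leq \underline{d}_\FF(S)$ and $\overline{d}_\FF(S)\leq \overline{D}_G(S)$ would not follow from pointwise bounds alone.
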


\begin{example}
    Let us fix $G= \Z$ and a subset $S \subset G$. Then:
    \begin{enumerate}[\rm 1.] 
        \item $\underline{D}_G(S)=\overline{D}_G(S)=0$ if $S$ is finite or $S=\{n^k\colon n \in \Z\}$ with $k\geq 2$.
        \item $\underline{D}_G(S)=\overline{D}_G(S)=\frac{1}{a}$ if  $S=\{an+b\colon n \in \Z\}$ where $a,b \in \N$ and $a >0$.   
    \end{enumerate}
\end{example}

\begin{example}
    Let $G= \Z^2$ and let  $S \subset G$. Then:
    \begin{enumerate}[\rm 1.] 
        \item $\underline{D}_G(S)=\overline{D}_G(S)=0$ if $S$ is contained in a union of   finitely many lines, that is,   $S \subset \bigcup_{i=1}^n L_n$ where for $1\leq i \leq n$ and for some $a_i,b_i,c_i \in \R$, 
        \[
        L_i=\{ (x,y) \in \Z^2 \colon a_ix+b_iy+c_i=0\}.
        \]
        \item $\underline{D}_G(S)=0$ and $\underline{D}_G(S)=1$  if $S= \Z^2 \cap H$ for any half plane $H \subset \R^2$.   
    \end{enumerate}
\end{example}
\par

\subsection{Natural and Banach mean entropies} 
\label{s:banach-entropy}

\begin{definition}
     Let $G$ be an amenable group with a F\o lner net $\FF=(F_i)_{i \in I}$ and let $A$ be a finite alphabet. We define  the \emph{upper natural mean $\FF$-entropy}, resp. the \emph{lower natural mean $\FF$-entropy}, of a subset $X\subset A^G$ by 
     \[
     \overline{\ent}_\FF (X) = \limsup_{i \in I} \frac{|X_{F_i}|}{|F_i|}, \quad \quad \underline{\ent}_\FF (X) = \liminf_{i \in I} \frac{|X_{F_i}|}{|F_i|}. 
     \]
     \par 
     Similarly, we define respectively the \emph{upper Banach mean entropy} and the \emph{lower Banach mean entropy} of $X$ by 
     \[
     \overline{\Ent} (X) = \lim_{i \in I} \sup_{g \in G} \frac{|(gX)_{F_i}|}{|F_i|}, \quad \quad \underline{\Ent} (X) = \lim_{i \in I} \inf_{g \in G} \frac{|(gX)_{F_i}|}{|F_i|}. 
     \]
\end{definition}
\par 
Again, the Banach mean entropies do not depend on the F\o lner net. When $\overline{\ent}_\FF (X)=\underline{\ent}_\FF (X)$, resp. $\overline{\Ent} (X)=\underline{\Ent} (X)$, which holds for example when $X$ is a subshift, we simply denote ${\ent}_\FF (X)$, resp. ${\Ent} (X)$. 
\par 
We also have the following chain of inequalities as in Lemma~\ref{l:chain-densities}: 

\begin{lemma}
 Let $S$ be a subset of an amenable group $G$ with a F\o lner net $\FF$. Let $A \neq \varnothing$ be a  finite alphabet and let $X \subset A^G$. Then 
\[ 
  0 \leq \underline{\Ent} (X)\leq   \underline{\ent}_\FF (X)\leq   \overline{\ent}_\FF (X) \leq   \overline{\Ent} (X) \leq \log |A|. 
 \]
 Moreover, if $X\subset A^G$ is a subshift then $\underline{\Ent} (X)=\overline{\Ent} (X)$. \qed 
\end{lemma}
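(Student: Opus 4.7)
The plan is to deduce the five inequalities from the elementary pointwise bound
\[
\inf_{g \in G} \frac{|(gX)_{F_i}|}{|F_i|} \;\leq\; \frac{|X_{F_i}|}{|F_i|} \;\leq\; \sup_{g \in G} \frac{|(gX)_{F_i}|}{|F_i|},
\]
valid for every $i \in I$ by choosing $g = e_G$ as a particular element in the sup and the inf. The middle quantity is precisely the one whose $\liminf$ and $\limsup$ define $\underline{\ent}_\FF(X)$ and $\overline{\ent}_\FF(X)$ (with an implicit logarithm, which the paper's definitions tacitly carry so that the terminal bound $\log|A|$ is meaningful).

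First I would verify that the Banach mean entropies $\overline{\Ent}(X)$ and $\underline{\Ent}(X)$ are well defined as the limits displayed in their definitions, rather than merely as $\limsup$/$\liminf$. This parallels the corresponding statement for Banach densities in the lemma preceding Lemma~\ref{l:chain-densities} and is essentially an Ornstein--Weiss type result: the function $F \mapsto \log|X_F|$ is subadditive under disjoint unions of $F$, and the F\o lner condition together with amenability forces the quantities $\sup_{g}|(gX)_{F_i}|/|F_i|$ and $\inf_{g}|(gX)_{F_i}|/|F_i|$ to converge along $\FF$ to limits independent of the F\o lner net chosen.

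With existence of the Banach limits in hand, the four inner inequalities drop out of the displayed pointwise bound: taking $\liminf_i$ of the left-hand inequality yields $\underline{\Ent}(X)\leq \underline{\ent}_\FF(X)$ (using that the $\liminf$ of the left member equals its limit $\underline{\Ent}(X)$); the middle step $\underline{\ent}_\FF(X) \leq \overline{\ent}_\FF(X)$ is tautological; taking $\limsup_i$ of the right-hand inequality yields $\overline{\ent}_\FF(X)\leq \overline{\Ent}(X)$. The two extreme bounds are immediate: $\underline{\Ent}(X) \geq 0$ since $|X_{F_i}|\geq 1$ (assuming $X \neq \varnothing$), while $\overline{\Ent}(X)\leq \log|A|$ follows from $|(gX)_{F_i}|\leq |A|^{|F_i|}$.

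For the \emph{moreover} claim, observe that if $X$ is a subshift then $gX = X$ for all $g\in G$ by $G$-invariance, whence $(gX)_{F_i} = X_{F_i}$ for all $i$ and both the sup and the inf over $g$ collapse to the common value $|X_{F_i}|/|F_i|$. Thus $\overline{\Ent}(X) = \underline{\Ent}(X) = \lim_i \log|X_{F_i}|/|F_i|$, the limit existing by the classical Ornstein--Weiss theorem applied to the subshift $X$. The only point requiring genuine care is the existence of the Banach-entropy limits; the rest is routine passage to $\liminf$ and $\limsup$, mirroring exactly the analogous chain for natural and Banach densities already recorded in the excerpt.
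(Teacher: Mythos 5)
Your chain of inequalities is exactly the routine verification the paper leaves to the reader (the lemma is stated with its proof omitted): evaluating the supremum and infimum at $g=1_G$ gives the pointwise bound, one passes to $\liminf$ and $\limsup$ along the net, and the extreme bounds come from $1\le |X_{F_i}|\le |A|^{|F_i|}$ (so you do need $X\neq\varnothing$ for the lower bound, as you note); for the moreover part, $gX=X$ collapses both Banach quantities to $\lim_i \log|X_{F_i}|/|F_i|$, which exists by Ornstein--Weiss since $F\mapsto\log|X_F|$ is then invariant and subadditive. Up to the implicit logarithm, which the paper's definitions indeed suppress, this part is fine and matches the intended argument.

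The step of yours that does not hold up is the preliminary claim, which you single out as the only point requiring genuine care, that subadditivity together with the F\o lner condition forces \emph{both} $\sup_{g}|(gX)_{F_i}|/|F_i|$ and $\inf_{g}|(gX)_{F_i}|/|F_i|$ to converge. The Ornstein--Weiss argument does apply to the supremum: $F\mapsto \sup_{g\in G}\log|(gX)_{F}|$ is translation-invariant, monotone and subadditive, so its normalized limit exists and is independent of the F\o lner net. The infimum enjoys no such sub- or superadditivity, and, unlike the lower Banach density, it admits no complementation trick; in fact for a non-invariant $X\subset A^G$ the limit $\lim_i \inf_{g}\log|(gX)_{F_i}|/|F_i|$ can genuinely fail to exist. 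For instance, on $G=\Z$, $A=\{0,1\}$, let $X$ consist of the configurations that are arbitrary on the negative integers and whose restriction to each block $[n_{j-1},n_j)$ with $j$ odd is periodic of period $\lceil\varepsilon n_j\rceil$, with no condition on the even-indexed blocks and with $n_j$ growing very fast; then $\inf_h \log_2|X_{[h,h+n)}|/n$ is at most about $\varepsilon$ when $n=n_j$ with $j$ odd, but at least $1-o(1)$ when $n=n_j$ with $j$ even. So existence of the lower Banach limit is an (overly optimistic) feature of the paper's definition rather than something you can verify; the honest reading is a $\liminf$. Fortunately the lemma does not need it: with $\underline{\Ent}$ read as $\liminf_i\inf_g$ and $\overline{\Ent}$ as the genuinely existing limit of the suprema, your pointwise bound at $g=1_G$ already yields the whole displayed chain, and the subshift case is unaffected since there everything reduces to the single Ornstein--Weiss limit.
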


\subsection{Natural and Banach mean dimensions} 
\label{s:banach-mean-dim}

\begin{definition}
     Let $G$ be an amenable group with a F\o lner net $\FF=(F_i)_{i \in I}$. Let $A$ be a nontrivial finite dimensional vector space alphabet. We define respectively the \emph{upper natural mean $\FF$-dimension} and the \emph{lower  natural mean $\FF$-dimension} of a subset $X\subset A^G$ by 
     \[
     \overline{\mdim}_\FF (X) = \limsup_{i \in I} \frac{\dim X_{F_i}}{|F_i|\dim A}, \quad \quad \underline{\mdim}_\FF (X) = \liminf_{i \in I} \frac{\dim X_{F_i}}{|F_i|\dim A}. 
     \]
     \par 
     Similarly, we define respectively the \emph{upper Banach mean dimension} and the \emph{lower Banach mean dimension} of $X$ by 
     \[
     \overline{\mDim} (X) = \lim_{i \in I} \sup_{g \in G} \frac{\dim (gX)_{F_i}}{|F_i|\dim A}, \quad \quad \underline{\mDim} (X) = \lim_{i \in I} \inf_{g \in G} \frac{\dim (gX)_{F_i}}{|F_i|\dim A}. 
     \]
\end{definition}
\par 
Similarly to mean entropies, we have: 
\begin{lemma}
 Let $S$ be a subset of a countable amenable group $G$ with a F\o lner sequence $\FF$. Let $A$ be a nontrivial  finite-dimensional vector space. Then  for every  vector subspace $X \subset A^G$, we have 
 \[
  0 \leq \underline{\mDim} (X)\leq   \underline{\mdim}_\FF (X)\leq   \overline{\mdim}_\FF (X) \leq   \overline{\mDim} (X) \leq \dim  A. 
 \]
 Moreover, if $X\subset A^G$ is a linear subshift then $\underline{\mDim} (X)=\overline{\mDim} (X)$. \qed 
\end{lemma}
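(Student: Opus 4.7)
The plan is to deduce the entire chain of inequalities directly from the definitions by the elementary observation that substituting $g$ equal to the identity element $e_G \in G$ gives $(gX)_{F_i}=X_{F_i}$, and then to handle the linear subshift case by exploiting shift-invariance to collapse the $\sup_g$ and $\inf_g$ onto a single sequence whose convergence is guaranteed by the Ornstein--Weiss subadditive-covering machinery.

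For the chain $0 \leq \underline{\mDim}(X) \leq \underline{\mdim}_\FF(X) \leq \overline{\mdim}_\FF(X) \leq \overline{\mDim}(X) \leq \dim A$, the leftmost inequality is trivial since vector space dimension is non-negative, and the rightmost follows from the containment $(gX)_{F_i} \subset A^{F_i}$, which forces $\dim (gX)_{F_i} \leq |F_i|\dim A$ for every $g \in G$ and every $i \in I$. For the three middle inequalities, I would use that taking $g=e_G$ yields
\[
\inf_{g \in G}\dim(gX)_{F_i} \;\leq\; \dim X_{F_i} \;\leq\; \sup_{g \in G}\dim(gX)_{F_i};
\]
dividing by $|F_i|\dim A$ and passing to $\liminf$ on the left and $\limsup$ on the right gives $\underline{\mDim}(X)\leq \underline{\mdim}_\FF(X)$ and $\overline{\mdim}_\FF(X)\leq \overline{\mDim}(X)$ respectively, while the link $\underline{\mdim}_\FF(X)\leq \overline{\mdim}_\FF(X)$ is just the general inequality $\liminf \leq \limsup$.

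For the \emph{moreover} assertion, I would observe that $X$ being a subshift means $X$ is $G$-invariant, so $gX=X$ and hence $(gX)_{F_i}=X_{F_i}$ for every $g \in G$ and every $i \in I$. Consequently both the supremum and the infimum over $g$ in the Banach definitions collapse, giving $\underline{\mDim}(X) = \overline{\mDim}(X) = \lim_{i} \dim X_{F_i}/(|F_i|\dim A)$ provided this limit exists. The existence of this common limit is the only non-routine input: it follows from applying the Ornstein--Weiss lemma to the set function $F \mapsto \dim X_F$, which is subadditive under union (since the restriction map $X_{F\cup F'} \hookrightarrow X_F \oplus X_{F'}$ is linear and injective) and right-translation-invariant on subshifts (since $\dim X_{Fg}= \dim (g^{-1}X)_F = \dim X_F$). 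This parallels exactly the classical argument for mean topological entropy of subshifts and simultaneously justifies the convergence implicitly asserted by the $\lim$ notation in the definition of $\overline{\mDim}$ and $\underline{\mDim}$.

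The main obstacle, such as it is, lies only in verifying that the Ornstein--Weiss convergence argument transfers verbatim from set cardinality to vector space dimension; but since dimension enjoys the same subadditivity and shift-invariance as $\log$ of cardinality (for subshifts), the adaptation is entirely mechanical, which is why the statement can reasonably be left with a \qed.
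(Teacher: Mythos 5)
Your proposal is correct and is precisely the routine verification the paper intends: the lemma is stated without proof (only a tombstone), and your argument --- evaluating at $g=1_G$ for the middle inequalities, using $(gX)_{F_i}\subset A^{F_i}$ for the bound $\dim A$, and for a linear subshift collapsing $\sup_g$ and $\inf_g$ via $gX=X$ together with Ornstein--Weiss convergence of the monotone, subadditive, translation-invariant function $F\mapsto \dim X_F$ --- is the standard route. One cosmetic correction: with the paper's convention $(gx)(h)=x(g^{-1}h)$, the $G$-invariance of $X$ yields $\dim X_{gF}=\dim X_F$ (invariance under \emph{left} translates of the window, since $(g^{-1}X)_F$ identifies with $X_{gF}$), not $\dim X_{Fg}=\dim(g^{-1}X)_F$ as written; the Ornstein--Weiss lemma applies just as well with this matching convention, so nothing of substance changes.
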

\par 
 Whenever $\overline{\mdim}_\FF (X)=\underline{\mdim}_\FF (X)$, resp. $\overline{\mDim} (X)=\underline{\mDim} (X)$,  we shall simply denote ${\mdim}_\FF (X)$, resp. ${\mDim} (X)$.

\subsection{Quasi-tilings of amenable groups} 
We say that $E\subset F$ is a \emph{$(1-\varepsilon)$-subset} of a finite set $F$ if $|E|>(1-\varepsilon) |F|$. 
Let $G$ be a countable amenable  group and let $\alpha, \beta \in [0,1]$. Then a collection $T=\{(T_i, T_i^o\}_{i \in I}$ is called an \emph{$\alpha$-disjoint} and \emph{$\beta$-covering quasi-tiling} of $G$ if: 
\begin{enumerate} [\rm (i)] 
    \item 
    $T_i^o$ is a $(1-\alpha)$-subset of $T_i$ for all $i \in I$;
    \item 
    $T_i^o \cap T_j^o = \varnothing$ for all $i \neq j$ in $I$;
    \item 
    $\underline{D}_G\left( \bigcup_{i \in I} T_i \right)\geq \beta$. 
\end{enumerate}

We shall need the following slightly more refined version of  the well-known  technical lemma \cite[Lemma~4.1]{tiling-amenable} (see also  \cite[I.\S2.  Theorem~6]{ornstein-weiss}). 
\par 
\begin{lemma}
\label{l:ornstein-weiss}
    Let $G$ be a countable amenable group with a F\o lner sequence $\FF=(F_n)_{n \geq 0}$ of symmetric sets containing the unity. Fix a finite subset  $M \subset G$  and $\varepsilon>0$. Then there exists an integer $r>0$ such that for every $n_0>0$, we can find an  $\varepsilon$-disjoint and $(1-\varepsilon)$-covering quasi-tiling $T=\{(T_i, T_i^o)\}_{i \in I}$ of $G$ with $r$ shapes $F_{n_1}, \dots, F_{n_r}$ where $n_0 <n_1 <\dots<n_r$ and such that $T_i^o= T_i^{-M}$ for every $i \in I$. 
\end{lemma}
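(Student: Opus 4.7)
My plan is to deduce this refinement from the standard Ornstein--Weiss quasi-tiling lemma (Lemma~4.1 of \cite{tiling-amenable}) by exploiting two simple facts. First, the F\o lner hypothesis makes the $M$-interior $F_{n}^{-M}$ occupy almost all of $F_{n}$ for large $n$: since $M$ is finite, $|\partial_{M} F_{n}|/|F_{n}|\to 0$, so I can fix an index $n^{*}>n_{0}$ such that $|F_{n}^{-M}|>(1-\varepsilon)|F_{n}|$ for every $n\geq n^{*}$. By translation-invariance this automatically forces condition~(i) in the definition of a quasi-tiling to hold for every tile $T_i=F_{n_j}g$ with $n_j\geq n^{*}$ and prescribed interior $T_i^{o}=T_i^{-M}=F_{n_j}^{-M}g$, which is the whole point of the extra requirement $T_i^{o}=T_i^{-M}$.

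Second, I would apply the standard lemma to the tail F\o lner sequence $(F_n)_{n\geq n^{*}}$ with an auxiliary parameter $\varepsilon'\leq \varepsilon$, to be chosen in terms of $\varepsilon$, $|M|$, and the fraction $|F_n\setminus F_n^{-M}|/|F_n|$. This yields the integer $r$ and the shape indices $n_{0}<n^{*}\leq n_{1}<\dots<n_{r}$ together with a family of centers $\{g_i\}_{i\in I}$ and tiles $T_i=F_{n_{j(i)}}g_i$. To upgrade the standard disjointness of the ``exclusive'' cores to the prescribed disjointness of the $M$-interiors $T_{i}^{-M}$ (condition~(ii)), I would strengthen the greedy step of the iteration: at each stage a new center $g$ is chosen so that $F_{n_{j}}^{-M}g\cap \bigcup_{k}T_{k}=\varnothing$, instead of only imposing the standard small-overlap bound. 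Because $F_{n_j}$ and $F_{n_j}^{-M}$ differ on a $\delta$-fraction with $\delta\leq \varepsilon$, this strengthened constraint eliminates only an $O(\varepsilon')$-fraction of admissible centers, and the iteration still delivers a family whose union has lower Banach density at least $1-\varepsilon$, giving condition~(iii).

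The main obstacle will be the quantitative bookkeeping in the second step: verifying that replacing $T\setminus \bigcup_{k}T_{k}$ by $T^{-M}\setminus \bigcup_{k}T_{k}$ in the greedy selection does not erode the final covering density below $1-\varepsilon$, once the parameters $\varepsilon'$ and $n^{*}$ are tuned. I expect this to reduce to a routine absorption of the loss $|T\setminus T^{-M}|/|T|<\varepsilon$ into the slack of the Ornstein--Weiss argument, leaving an $\varepsilon$-disjoint and $(1-\varepsilon)$-covering quasi-tiling $\{(T_{i},T_{i}^{o})\}_{i\in I}$ with $T_{i}^{o}=T_{i}^{-M}$ and shapes drawn from indices strictly greater than $n_{0}$, as required.
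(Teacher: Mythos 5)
Your proposal follows essentially the same route as the paper: the paper also proves this by rerunning the construction of \cite[Lemma~4.1]{tiling-amenable}, imposing $T_i^o = T_i^{-M}$ at every step of the induction and noting that, since $M$ is a fixed finite set, the F\o lner property makes the $M$-interiors of sufficiently invariant shapes occupy more than a $(1-\varepsilon)$-fraction, so the losses are absorbed in the existing slack. Your added detail about strengthening the greedy selection and tuning the auxiliary parameter is exactly the ``mutatis mutandis'' bookkeeping the paper leaves implicit.
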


\begin{proof}
    The proof is similar, \emph{mutatis mutandis}, to the proof of \cite[Lemma~4.1]{tiling-amenable} where we only need to require that $T_i^o= T_i^{-M}$ for all $i \in I$ at every step of the induction. Note that by properties of F\o lner sequences and as $M$ is a fixed finite subset of $G$, a similar construction of such quasi-tilings  as in the proof of \cite[Lemma~4.1]{tiling-amenable}  can be applied. 
\end{proof}

\begin{lemma}
    \label{l:technical-lem-density}
    Let $\alpha, \beta\in [0,1]$. Let $T=\{(T_i, T_i^o)\}_{i \in I}$ be a $(1-\alpha)$-disjoint and $\beta$-covering quasi-tiling  of  a countable amenable group $G$. Then the disjoint quasi-tiling $\{T_i^o\}_{i \in I}$ is an $\alpha\beta$-covering of $G$. 
\end{lemma}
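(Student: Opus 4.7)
Setting $U := \bigcup_{i \in I} T_i$ and $V := \bigcup_{i \in I} T_i^o$, the hypothesis gives $V \subset U$, pairwise disjoint $T_i^o$, $|T_i^o| > \alpha|T_i|$ for every $i$, and $\underline{D}_G(U) \geq \beta$; we must show $\underline{D}_G(V) \geq \alpha \beta$. The plan is to establish, for every finite $F \subset G$, an inequality of the form
$$|V \cap F| \geq \alpha |U \cap F| - (\text{boundary term}),$$
and then specialize $F$ to the translates $F_n g$ of a F\o lner sequence, where the boundary term becomes negligible compared to $|F_n|$, so that the Banach density bound drops out.

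First, I would write $|V \cap F| = |U \cap F| - |(U \setminus V) \cap F|$ and bound the defect: every $g \in U \setminus V$ lies in some $T_i \setminus T_i^o$, so with $J_F := \{i \in I : T_i \cap F \neq \varnothing\}$ one has $(U \setminus V) \cap F \subset \bigcup_{i \in J_F}(T_i \setminus T_i^o)$ and hence $|(U \setminus V) \cap F| \leq (1-\alpha) \sum_{i \in J_F} |T_i|$ by $(1-\alpha)$-disjointness. I would then split $J_F$ into the interior $J_F^{\mathrm{in}} := \{i : T_i \subset F\}$ and its complement $J_F^{\partial}$. For $i \in J_F^{\mathrm{in}}$ the cores $T_i^o$ are pairwise disjoint subsets of $V \cap F$, hence $\sum_{J_F^{\mathrm{in}}} |T_i^o| \leq |V \cap F|$, and using $|T_i| < |T_i^o|/\alpha$ gives $\sum_{J_F^{\mathrm{in}}} |T_i| \leq |V \cap F|/\alpha$. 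Substituting and rearranging yields the key inequality
$$|V \cap F| \geq \alpha |U \cap F| - \alpha(1-\alpha) \sum_{i \in J_F^{\partial}} |T_i|.$$

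Finally, I would apply this with $F = F_n g$ for a F\o lner sequence $(F_n)$ and arbitrary $g \in G$. Since in the paper's setting the quasi-tiling comes from Lemma~\ref{l:ornstein-weiss}, all $T_i$ are translates of finitely many shapes, and one can fix a finite $K \subset G$ such that every boundary tile $T_i \in J_{F_n g}^{\partial}$ is contained in a $K$-thickening of the boundary of $F_n g$. Since the cores $T_i^o$ are disjoint and sit in this thickening, $\sum_{J_{F_n g}^{\partial}} |T_i^o|$ is bounded by its cardinality, and then $|T_i| < |T_i^o|/\alpha$ gives an estimate of the form $|\partial_K F_n g|/\alpha$ for $\sum |T_i|$, which is $o(|F_n|)$ uniformly in $g$ by translation invariance and the F\o lner property. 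Dividing the key inequality by $|F_n|$, taking $\inf_g$ then $\liminf_n$, one concludes $\underline{D}_G(V) \geq \alpha \underline{D}_G(U) \geq \alpha\beta$, as desired.

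The main obstacle is precisely this last step: making $\sum_{J_F^{\partial}} |T_i|$ negligible compared to $|F|$ in the F\o lner limit, uniformly in the translate $g$, requires a uniform bound on the sizes of the shapes of the $T_i$. This is automatic for the Ornstein--Weiss-type quasi-tilings produced by Lemma~\ref{l:ornstein-weiss}, but would fail for a general quasi-tiling with arbitrarily large tiles, where boundary tiles could in principle swamp the interior contribution.
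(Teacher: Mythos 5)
Your argument is correct, and it is essentially the proof the paper points to: the paper gives no details beyond citing Lemma~3.4 of Downarowicz--Huczek--Zhang, and your counting (the key inequality $|V\cap F|\geq \alpha\,|U\cap F|-\alpha(1-\alpha)\sum_{i\in J_F^{\partial}}|T_i|$ followed by the F\o lner boundary estimate) is exactly the standard argument behind that citation. Your closing caveat is also well taken: the boundary step does require the tiles to have uniformly bounded size (finitely many shapes), a condition that is built into the notion of quasi-tiling in the cited reference and is satisfied by the quasi-tilings produced by Lemma~\ref{l:ornstein-weiss}, where the lemma is applied, but is not imposed by this paper's own definition of quasi-tiling. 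Without such a bound the statement as written can genuinely fail: in $\Z$ take $T_m=[-10^m,10^m]$ with core the not-yet-covered left block $[-10^m,-10^{m-1})$ together with a block of size about $10^{m-1}$ at the right end of $T_m$; the cores are pairwise disjoint and exceed half of each tile and the tiles cover $\Z$ (so $\alpha=1/2$, $\beta=1$), yet the union of the cores leaves arbitrarily long gaps and so has lower Banach density $0$ rather than at least $1/2$.
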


\begin{proof}
    See the proof of \cite[Lemma~3.4]{tiling-amenable}
\end{proof}

\section{Almost injective cellular automata}

\label{s:almost-injective}

Following \cite{phung-weakly}, we say that a group $G$ is \emph{weakly surjunctive} if for every finite group alphabet $A$, any injective CA  $\tau \colon A^G \to A^G$ which is also a group homomorphism (also called a \emph{group CA}) must  surjective. A group $G$ is \emph{surjunctive} if for every finite  alphabet $A$, all injective CA $\tau \colon A^G \to A^G$ are surjective. Hence, sofic groups are surjunctive and the Gotschalk surjunctivity  conjecture states that every group is surjunctive. 
\par 
We begin with the following observation which shows the injectivity of a restriction of a group CA over an infinite group universe guarantees that the group CA is injective. 
\par 
\begin{theorem}
\label{t:almost-injective-group-ca}
Let $G$ be an infinite group and let $A$ be a group alphabet.  Suppose that $\tau \colon A^G \to A^G$ is a group CA such that $\tau\vert_{U}$ is injective where $U\subset A^G$ is a nonempty open subshift. Then $\tau$ is injective. If in addition $G$ is a  weakly surjunctive group and  $A$ is a finite group then $\tau$ is surjective. 
\end{theorem}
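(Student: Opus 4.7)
The plan is to reduce the first claim to showing that $K := \ker \tau \subset A^G$ is trivial; since $\tau$ is a group homomorphism, this is equivalent to injectivity. Once injectivity is established, the second claim is immediate: if $G$ is weakly surjunctive and $A$ is a finite group, the definition of weak surjunctivity applied directly to the injective group CA $\tau$ over the finite group alphabet $A$ yields surjectivity.

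For the core injectivity step, I would exploit the interplay between the openness and the $G$-invariance of the open subshift $U$. Because $U$ is nonempty and open in the prodiscrete topology, it contains a basic cylinder $V = \{z \in A^G : z\vert_F = p\}$ for some finite $F \subset G$ and some $p \in A^F$. The $G$-invariance of $U$ then forces $gV \subset U$ for every $g \in G$; explicitly, $gV = \{z \in A^G : z\vert_{gF} = g_* p\}$, where $g_* p$ is the shifted pattern on $gF$. Since $G$ is infinite and $F$ is finite, $FF^{-1}$ is a proper subset of $G$, so one can choose $g_1, g_2 \in G$ with $g_1 F \cap g_2 F = \varnothing$. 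This disjointness is the geometric fact driving the argument.

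The heart of the proof is a direct collision construction. Suppose, toward a contradiction, that some $k \in K$ satisfies $k \neq e$, where $e$ denotes the identity of the group $A^G$. Exploiting $g_1 F \cap g_2 F = \varnothing$, define $y \in A^G$ by setting $y\vert_{g_1 F} := g_{1*} p$, $y\vert_{g_2 F} := (g_{2*} p) \cdot k\vert_{g_2 F}$ (pointwise product in $A$), and $y$ arbitrary on $G \setminus (g_1 F \cup g_2 F)$. The first constraint places $y \in g_1 V \subset U$; and setting $z := y \cdot k^{-1}$, the second constraint yields $z\vert_{g_2 F} = g_{2*} p$, so $z \in g_2 V \subset U$. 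Since $\tau$ is a group homomorphism and $k \in K$, we have $\tau(y) = \tau(z)\tau(k) = \tau(z)$, and the injectivity of $\tau\vert_U$ forces $y = z$, whence $k = e$, contradicting $k \neq e$. The only delicate point is checking that the two defining constraints on $y$ are consistent and that $z$ indeed lands in $U$; both rest on the disjointness of $g_1 F$ and $g_2 F$ together with the $G$-invariance of $U$. This is why the hypothesis that $U$ is an open \emph{subshift} (rather than merely an open subset) cannot be weakened: a single cylinder would not suffice, as there would be no room to realize both sides of the collision simultaneously.
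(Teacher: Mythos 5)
Your proof is correct, and it proves the same reduction as the paper (triviality of $\Ker\tau$, then weak surjunctivity applied verbatim for the second claim), but the way you rule out a nontrivial kernel element is executed differently. The paper argues by contradiction at the level of sets: injectivity of $\tau\vert_U$ gives $(U\Gamma^*)\cap U=\varnothing$ for $\Gamma^*=\Ker\tau\setminus\{1_{A^G}\}$, so for any $x\in\Gamma^*$ the nonempty open set $Ux$ sits inside the closed subshift $\Sigma=A^G\setminus U$; since $G$ is infinite, a closed subshift containing a nonempty open set is dense, hence $\Sigma=A^G$, contradicting $U\neq\varnothing$. You instead make the collision explicit: using that $U$ is a $G$-invariant open set containing a cylinder $V$ on a finite window $F$, and that infiniteness of $G$ yields $g_1F\cap g_2F=\varnothing$, you patch a configuration $y$ carrying the shifted pattern on $g_1F$ and the pattern twisted by $k$ on $g_2F$, so that both $y$ and $z=yk^{-1}$ lie in $U$ while $\tau(y)=\tau(z)$, forcing $k=1_{A^G}$. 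The two arguments rest on the same two ingredients (shift-invariance of $U$ plus disjoint translates of a cylinder in an infinite group); your version inlines the density step of the paper into a concrete construction, which makes it slightly longer but more self-contained and makes transparent exactly where the hypotheses ``open subshift'' and ``$G$ infinite'' are used, whereas the paper's version is shorter once one grants the density of a closed subshift containing a nonempty open set. Your consistency check (the two constraints on $y$ live on disjoint sets) and the verification $z\vert_{g_2F}=g_{2*}p$ are the only delicate points, and you handle both correctly.
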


\begin{proof}
    Let $\Gamma = \Ker \tau \subset A^G$ then $\Gamma$ is a group subshift of finite type and thus closed. Suppose on the contrary that $\tau$ is not injective. Then  $\Gamma^*=\Gamma \setminus \{1_A^G\}$ is nonempty. Let us denote  
    $U \Gamma^* = \{uv \colon u \in U, \, v \in \Gamma^*\}$.  Since $\tau\vert_U$ is injective, we deduce that 
    \begin{equation}
    (U \Gamma^*) \cap U = \varnothing.  
    \end{equation}
    \par 
    Consequently,  $U  \Gamma^* \subset \Sigma=A^G \setminus U$. In particular, for an arbitrary $x \in \Gamma^*$, we deduce that $\Sigma$ contains the nonempty  open set $Ux$ of $A^G$. Since $\Sigma$ is a closed subshift and $G$ is infinite, it follows that $\Sigma$ is a dense closed subset of $A^G$ and we can thus conclude that $\Sigma= A^G$, which is a contradiction. 
Therefore, we must have $\Gamma = \{1_A^G\}$, i.e., $\tau$ is injective and the proof is thus complete. 
\end{proof}
\par 
Theorem~\ref{t:almost-injective-group-ca} clearly fails for CA over finite universes.  
In fact, the following example shows that  the implication $\tau\vert_U$ injective$\implies$$\tau$ injective fails with $U$ being only an open  subset (and not an open subshift). 
\begin{example}
    Let $G= \Z$ and let $A=\{0,1\}$. Let $\tau \colon A^G \to A^G$ be a CA with the  local rule at each $i \in \Z$ given by $f(x_i, x_{i+1})=x_i +  x_{i+1}$ (mod 2). Let $\Sigma = \{ x \in A^G \colon x_0 = 1\} $. 
    The complement of $\Sigma$ is $U= \{ x \in A^G \colon x_0 = 0\}$ and the linear $\tau$ is clearly injective when restricted to $U$. However, $\tau$ is not injective since its kernel contains the two  constant configurations. Nevertheless, Theorem~\ref{t:finite-z-d-almost-general-closed-subset-dense-periodic} or a direct computation implies that $\tau$ must be surjective. The same holds if we exchange the role of $\Sigma$ and $U$. 
\end{example}

We obtain the following  result which allows us to bound the period of periodic configurations in the approximation of a strongly irreducible subshift of finite type.

\begin{theorem}
\label{t:density-periodic-general}
    Let $G= \Z^d$ and let $A$ be a set. Let $X\subset A^G$ be a strongly $\Delta$-irreducible subshift of finite type with window $\Delta = [-r,r]^d\cap \Z^d$. Suppose that $X$ contains a periodic configuration $x_0$ of total period $2n_0>2r$, i.e., $2n_0\Z^d x_0 = x_0$. Then for $\FF=(F_n)_{n \geq 1}$ where $F_n= [-k_n, k_n]^d \cap \Z^d$ with $k_n=(nn_0-2r)(n_0+1)$, we have:
    \[
    (\Fix((2k_n+4r)G) \cap X )\vert_{F_n} = X_{F_n}. 
    \]
\end{theorem}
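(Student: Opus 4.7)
The plan is to show that every $p \in X\vert_{F_n}$ extends to an $N$-periodic configuration $\tilde z \in X$, where $N := 2k_n + 4r$. A direct expansion yields $N = 2n_0 m$ with $m := n(n_0+1) - 2r$, and also $k_n + 2r = n_0 m = N/2$. Consequently $x_0$ is itself $N$-periodic, i.e. $x_0 \in \Fix(N\Z^d)$, since $N$ is a multiple of the total period $2n_0$.

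Given $p$, I would first extend it to some $x \in X$ with $x\vert_{F_n} = p$. Then, applying strong $\Delta$-irreducibility to $x$ and $x_0$ with $S := F_n$ and $T$ disjoint from $S\Delta = F_n + \Delta = [-(k_n+r), k_n+r]^d$ (working either with finite truncations of $T$ within a fundamental domain of $N\Z^d$, or globally via compactness when $A$ is finite), I obtain $z \in X$ satisfying $z\vert_{F_n} = p$ and $z\vert_{T} = x_0\vert_{T}$. Taking $T = \Z^d \setminus [-(k_n+r), k_n+r]^d$ forces $z$ to agree with $x_0$ outside the $\Delta$-enlargement of $F_n$. The configuration $\tilde z$ is then defined as the $N$-periodic extension of $z\vert_{Q}$, where $Q := [-(k_n+2r), k_n+2r]^d$ is the symmetric cube of side $N+1$; the $N\Z^d$-translates of $Q$ cover $\Z^d$ with pairwise overlaps consisting of single hyperplanes at $y_i = \pm(k_n+2r)$. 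Since these hyperplanes lie in $T$ (as $k_n + 2r > k_n + r$), one has $z = x_0$ on them, and the $N$-periodicity of $x_0$ makes the overlap values of the shifted copies of $z\vert_Q$ coincide. Hence $\tilde z$ is well-defined, lies in $\Fix(N\Z^d)$, and satisfies $\tilde z\vert_{F_n} = p$.

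The core of the proof is then verifying $\tilde z \in X$: for each $h \in \Z^d$, the $\Delta$-pattern $\tilde z\vert_{h+\Delta}$ must be admissible. If $h + \Delta$ lies inside a single $N\Z^d$-translate of $Q$, this is immediate, as $\tilde z\vert_{h+\Delta}$ is, up to an $N\Z^d$-shift, a $\Delta$-pattern of $z$. If $h + \Delta$ straddles walls of the $Q$-tiling, the plan is to show $\tilde z\vert_{h+\Delta} = x_0\vert_{h+\Delta}$: the alignment $k_n + 2r = N/2$ makes the outer ring of $Q$ of width $r$ coincide coordinate-wise with $T$, so every $g \in h+\Delta$, projected to $Q$, falls into $T$; hence $\tilde z(g)$ equals $z$ at a point of $T$, which equals $x_0$ at that point, and by the $N$-periodicity of $x_0$ this equals $x_0(g)$. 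Since $x_0 \in X$, the pattern $\tilde z\vert_{h+\Delta} = x_0\vert_{h+\Delta}$ is admissible.

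The main obstacle is this boundary-crossing verification, in particular at the seams where the outer ring of $Q$ abuts the $\Delta$-buffer $(F_n+\Delta)\setminus F_n$: for a straddling $h$ whose $\Delta$-neighborhood includes points whose $Q$-projection falls on the interface between $T$ and $F_n+\Delta$, the value of $z$ there is not \emph{a priori} forced to agree with $x_0$, so the naive argument must be refined. A typical resolution is to iterate the application of strong $\Delta$-irreducibility so that $z$ additionally coincides with $x_0$ on a thin extra slab adjacent to the seam — exploiting the pairwise disjointness of the $N\Z^d$-translates of $F_n + \Delta$ guaranteed by $N > 2(k_n+r)$, together with the $2n_0$-periodicity of $x_0$ to make the enforced values consistent across neighbouring tiles. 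Carrying out this bookkeeping carefully, corner by corner of $F_n + \Delta$ inside $Q$, is the hard part of the argument.
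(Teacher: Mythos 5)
Your reduction of the statement to the construction, for each $p\in X_{F_n}$, of an $N$-periodic extension in $X$ with $N=2k_n+4r$ is the right frame, and your arithmetic is correct ($N=2n_0\big(n(n_0+1)-2r\big)$, so $x_0\in\Fix(N\Z^d)$, and $k_n+2r=N/2$). But the proposal has a genuine gap, which you yourself flag and then defer: the admissibility of $\tilde z$ on windows straddling the walls of the $Q$-tiling is precisely the substance of the theorem, and it is left as ``the hard part''. Concretely, your claim that every $g\in h+\Delta$ for a straddling $h$ reduces into $T=Q\setminus(F_n+\Delta)$ is false: if the window straddles a wall in coordinate $i$, the reduced point can have $i$-th coordinate of absolute value as small as $N/2-(2r-1)=k_n+1$, with all other coordinates small, i.e.\ it can land in the buffer $(F_n+\Delta)\setminus F_n$, where $z$ is not forced to agree with $x_0$. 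Worse, the repair you sketch --- iterating strong $\Delta$-irreducibility so that $z$ also agrees with $x_0$ on a thin slab adjacent to the seam --- cannot be run as stated: those offending reduced points lie \emph{inside} $F_n\Delta$, so any set $T'$ containing them violates the separation $F_n\Delta\cap T'=\varnothing$ that strong $\Delta$-irreducibility needs in order to prescribe simultaneously $z\vert_{F_n}=p$ and $z\vert_{T'}=x_0\vert_{T'}$. Nor is this mere bookkeeping: with the stated constants the gap between adjacent $N\Z^d$-translates of $F_n\Delta$ has width $2r-1$, one less than the diameter of $\Delta$, so there exist windows meeting two distinct translates of $F_n\Delta$, and no single (or naively iterated) application of strong irreducibility makes their patterns visibly admissible. (A minor additional point: the ``compactness when $A$ is finite'' aside is unavailable here, since $A$ is only assumed to be a set; your finite-truncation fallback is the correct option.)

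For comparison, the paper does not attempt this patching from scratch: its proof consists in invoking the proof of Ceccherini-Silberstein and Coornaert's density-of-periodic-configurations theorem \cite{csc-nonlinearity}, with the substitutions $L=(2k_n+4r)\Z^d\subset K=2n_0\Z^d$, $H=K\cap L=L$, $\Omega=F_n$ and the nested boxes $\Omega_i=[-k_n-ir,k_n+ir]^d\cap\Z^d$. It is exactly that multi-layer interpolation scheme which organizes the gluing and the periodization so that the wall-crossing windows are handled with these tight constants. Your write-up would become a proof only after carrying out that verification in detail (or correctly citing it); as it stands, the central step is missing.
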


\begin{proof}
    The result follows from an immediate modification of the proof of \cite[theorem 1.1]{csc-nonlinearity}. With the same notations as in the proof of \cite[theorem 1.1]{csc-nonlinearity} and $G=\Z^d$, we have $L\subset K$ so that $H=K \cap L= L$ where $L=(2k_n+4r)G \subset K= 2n_0G$, $\Omega=F_n$, and  $\Omega_i= [-k_n-ir, k_n+ir]^d \cap \Z^d$.  
\end{proof}

Hence, we obtain the following strengthened form of surjunctivity for CA over strongly irreducible closed subshifts of finite type. 

\begin{theorem}
\label{t:finite-z-d-almost-general-closed-subset-dense-periodic}
 Let $G=\Z^d$, $d \geq 1$, and let $A$ be a finite alphabet. Let $X \subset A^G$ be 
    {strongly irreducible closed subshift of finite type} which contains a periodic configuration.  
    Suppose that $\tau \colon X \to X$ is a  CA such that $\tau\vert_{U}$ is injective where $U \subset X$ is a {nonempty open subset}. Then $\tau$ is surjective and pre-injective. 
\end{theorem}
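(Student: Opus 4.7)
The plan is to prove surjectivity first by a counting argument with periodic configurations, and then derive pre-injectivity from the Moore direction of the Garden of Eden theorem on strongly irreducible subshifts of finite type (\cite[Theorem~1.1]{csc-myhill-strongly-irreducible}). Since $U$ is nonempty open in $X$, there exist a finite subset $E \subset G$ and a pattern $p \in A^E$ admissible in $X$ with $[p]_E \cap X \subset U$. Fix a strong $\Delta$-irreducibility constant for $X$ with $\Delta = [-r,r]^d \cap \Z^d$, enlarging $r$ so that $\Delta$ also contains the window of the SFT and a memory set of $\tau$. Since $X$ contains a periodic configuration, we may choose $x_0 \in X$ of total period $2n_0\Z^d$ with $n_0 > r$. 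Theorem~\ref{t:density-periodic-general} then supplies the F\o lner sequence $F_n = [-k_n,k_n]^d \cap \Z^d$ with $k_n = (nn_0 - 2r)(n_0+1)$ and the set $P_n := \Fix((2k_n+4r)G) \cap X$ of periodic configurations of $X$ satisfying $P_n\vert_{F_n} = X_{F_n}$.

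For $n$ large enough that $E\Delta \subset F_n$, strong irreducibility lets us glue $p$ on $E$ with any admissible pattern on $F_n \setminus E\Delta$ into a pattern $q \in X_{F_n}$ with $q\vert_E = p$; distinct choices on $F_n \setminus E\Delta$ yield distinct $q$'s, and by Theorem~\ref{t:density-periodic-general} each such $q$ extends to a configuration in $P_n$, which then lies in $[p]_E \cap X \subset U$. Hence
\[
|P_n \cap U| \;\geq\; |X_{F_n \setminus E\Delta}| \;=\; \exp\bigl((\ent(X) + o(1))\,|F_n|\bigr).
\]
Since $\tau\vert_U$ is injective and $\tau$ sends $(2k_n+4r)G$-periodic points of $X$ to $(2k_n+4r)G$-periodic points of $\tau(X)$, we have $|\tau(P_n \cap U)| = |P_n \cap U|$ and $|\tau(P_n \cap U)| \leq |\tau(X)_{D_n}| \leq \exp\bigl((\ent(\tau(X)) + o(1))(2k_n+4r)^d\bigr)$, where $D_n$ is any fundamental domain of $(2k_n+4r)\Z^d$ of cardinality $(2k_n+4r)^d$. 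Taking logarithms, dividing by $(2k_n+4r)^d$, and letting $n \to \infty$ (using $|F_n|/(2k_n+4r)^d \to 1$) yields $\ent(X) \leq \ent(\tau(X))$.

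Now $\tau(X) \subset X$ is a closed $G$-invariant subshift of the strongly irreducible SFT $X$, and a proper such subshift would satisfy $\ent(\tau(X)) < \ent(X)$ (the standard strict entropy drop used in the proof of the Garden of Eden theorem in \cite{csc-myhill-strongly-irreducible}). The entropy equality therefore forces $\tau(X) = X$, so $\tau$ is surjective, and pre-injectivity follows from the Moore direction of \cite[Theorem~1.1]{csc-myhill-strongly-irreducible}. The main obstacle is the density step: one must coordinate the period $2k_n + 4r$ with the strong-irreducibility constant $\Delta$ so that the cylinder $[p]_E$ is populated by sufficiently many periodic configurations produced by Theorem~\ref{t:density-periodic-general}; once this interplay between strong irreducibility and periodic approximation is in place, the entropy comparison is a routine limit computation, and the subsequent appeal to the Garden of Eden machinery for strongly irreducible SFT closes the argument.
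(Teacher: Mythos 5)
Your proposal is correct and follows essentially the same route as the paper's proof: a cylinder inside $U$, periodic approximation via Theorem~\ref{t:density-periodic-general}, a strong-irreducibility gluing producing exponentially many periodic configurations lying in the cylinder, injectivity of $\tau\vert_U$ on these periodic points to force $\ent(\tau(X)) = \ent(X)$, surjectivity from the fact that a proper closed subshift of a strongly irreducible SFT has strictly smaller entropy, and finally pre-injectivity from the Moore direction. The only differences are cosmetic: you periodize the glued pattern directly with Theorem~\ref{t:density-periodic-general} instead of the paper's explicit patching argument, and the Moore direction (surjective $\Rightarrow$ pre-injective) should be credited to \cite{fiorenzi} rather than to \cite[Theorem~1.1]{csc-myhill-strongly-irreducible}, which is the Myhill direction.
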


\begin{proof}
\par 
Let us denote $\Gamma = \tau(X) \subset X$. 
Since $U\subset X$ is a nonempty open subset,  $U$ contains a cylinder $C(p)=\{ x \in X  \colon x \vert_S=p\}$ for some pattern $p \in X_S$ where $S\subset G$ is a finite subset.
\par 
Choose a sufficiently large window $\Delta = [-r,r]^d\cap \Z^d$, where $r \geq 2$ is an integer, such that $\Delta \supset S$ is a memory set of $\tau$ and $X= \Sigma(A^G; X_\Delta, \Delta)$  is a strongly $\Delta$-irreducible subshift of finite type. 
In particular, we deduce from the definition of $\Delta$-irreducibility that for every finite subset $E \subset G$, we have: 
\begin{equation}
    \label{e:finite-z-d-almost-general-closed-subset-dense-periodic-1}
    C(p)_{E \setminus S\Delta} = X_{E \setminus S\Delta}.
\end{equation}
\par
Indeed, let $\tilde{p}\in X$ be any configuration extending $p$. Let $x \in X$. Since $S\Delta \cap ( E \setminus S\Delta ) = \varnothing$ and $X$ is $\Delta$-irreducible, there exists $y \in X$ such that $y\vert_{S} = \tilde{p}\vert_S$ and $y \vert_{E \setminus S\Delta} = x\vert_{E \setminus S\Delta}$. Hence, $X_{E \setminus S\Delta}\subset C(p)_{E \setminus S\Delta}$. As clearly $C(p)_{E \setminus S\Delta} \subset X_{E \setminus S\Delta}$, the relation \eqref{e:finite-z-d-almost-general-closed-subset-dense-periodic-1} follows. 
\par 
Fix a periodic configuration  in $X$ of total period $2n_0>2r$. 
Let us denote  $\FF=(F_n)_{n \geq 3}$ where $F_n= [-k_n, k_n]^d \cap \Z^d$ and with  $k_n=(nn_0-2r)(n_0+1)$. Note that $k_n\geq r(n_0+1)\geq 4r$ for all $n \geq 3$ and $\FF$ is a F\o lner sequence of $\Z^d$. 
\par 
Let $H_n =  (2k_n+4r)\Z^d$ then by Theorem~\ref{t:density-periodic-general}, we have 
\begin{equation}
\label{e:finite-z-d-almost-general-closed-subset-dense-periodic-2}
    (\Fix(H_n) \cap X)_{F_n} = X_{F_n} 
\end{equation}

where $P_n = \Fix H_n = \{x \in A^G \colon hx=x,\, \forall h \in H_n\}\subset A^G$ is the space of $H_n$-periodic configurations. Note that we have a canonical bijection $P_n \simeq A^{F_n}$ given by the restriction $x \mapsto x\vert_{F_n}$.  
\par 
Let us define $Q_n = P_n \cap C(p) =  \{x \in P_n\cap X \colon x\vert_{S} =p \}$ for every $n \geq 3$. Then $Q_n \subset U$ and we thus obtain induced injective maps 
$\tau \vert_{Q_n} \colon Q_n \to P_n$  for all $n \geq 3$. 
By \eqref{e:finite-z-d-almost-general-closed-subset-dense-periodic-2}, we have $(P_n \cap X)_{F_n}= X_{F_n}$. Since $F_n \setminus S\Delta \subset F_n$ and $F_n$ is finite, we deduce from  $(P_n \cap X)_{F_n}= X_{F_n}$ and the relation \eqref{e:finite-z-d-almost-general-closed-subset-dense-periodic-1} that: 
\begin{align}
\label{e:finite-z-d-almost-general-closed-subset-dense-periodic-3} 
(P_n \cap X)_{F_n \setminus S\Delta} = X_{F_n \setminus S\Delta}=C(p)_{F_n \setminus S\Delta}. 
\end{align} 
\par 
We claim that 
\begin{align}
\label{e:finite-z-d-almost-general-closed-subset-dense-periodic-4}
(Q_n)_{F_n \setminus S\Delta} = (P_n \cap C(p))_{F_n \setminus S\Delta} = X_{F_n \setminus S\Delta}=C(p)_{F_n \setminus S\Delta}. 
\end{align}
\par 
Indeed, we clearly have $(P_n \cap C(p))_{F_n \setminus S\Delta} \subset (P_n \cap X)_{F_n \setminus S\Delta} = X_{F_n \setminus S\Delta}$ by \eqref{e:finite-z-d-almost-general-closed-subset-dense-periodic-3} as $C(p) \subset X$.  On the other hand, let us fix a $H_n$-invariant configuration $x \in P_n \cap X$ and let $q = x_{F_n \Delta^2\setminus S\Delta} \in  X_{F_n \Delta^2 \setminus S\Delta}=  C(p)_{F_n \Delta^2 \setminus S\Delta}$ (cf.~\eqref{e:finite-z-d-almost-general-closed-subset-dense-periodic-1}). Thus, we can find a configuration $y \in C(p)$ which extends $q$, that is, 
\[
y \in X, \quad  y\vert_S = p \quad \text{and}\quad y \vert_{F_n \Delta^2 \setminus S\Delta}=q= x\vert_{F_n \Delta^2 \setminus S\Delta}. 
\] 
\par
Consider the patched configuration $z \in A^G$ defined by $z\vert_{G\setminus (H_n S\Delta)} = x\vert_{G\setminus (H_n S\Delta)}$ and $z(h s \delta)= y(s\delta) $ for all $h \in H_n$, $s \in S$ and $\delta \in \Delta$. Since $x$ is $H_n$-invariant, we deduce from the construction that so is $z$. In other words, $z \in P_n$. Moreover, note that 
$S\Delta^2 \subset F_n \Delta^2$ and $F_n\Delta^2$ contains a  fundamental domain of $H_n$.  Consequently, 
as $x \in X$ is $H_n$-invariant and $y\vert_{F_n \Delta^2} \in X_{\vert_{F_n \Delta^2}}$ and $ y \vert_{F_n \Delta^2 \setminus S\Delta}= x\vert_{F_n \Delta^2 \setminus S\Delta}$, we deduce that $z\vert_{g\Delta} \in X_{g \Delta}$ for all $g \in G$. Hence, $z \in X$ since  $\Delta$ is a defining window of the subshift of finite type $X$. As $z_S=y_S=p$, it follows that $z \in C(p)$. To summarize, we have shown that  
$z \in P_n\cap C(p)$ and $z\vert_{F\setminus S\Delta} =x\vert_{F\setminus S\Delta}$. The claim \eqref{e:finite-z-d-almost-general-closed-subset-dense-periodic-4} is thus proved.   
\par 
We infer from the injectivity of $\tau\vert_{Q_n}$, the relation  \eqref{e:finite-z-d-almost-general-closed-subset-dense-periodic-4}, and the inclusion  $X_{F_n} \subset A^{S\Delta} \times X_{F_n\setminus S \Delta}$  the following estimation:  
\begin{align*}
\vert \Gamma_{F_n\Delta^2}\vert \geq \vert \tau(Q_n) \vert = |Q_n| \geq |(Q_n)_{F_n \setminus S\Delta}| = |X_{{F_n \setminus S\Delta}}| \geq \frac{|X_{F_n}|}{|A|^{|S\Delta|}}. 
\end{align*} 
\par 
Therefore, we obtain
\begin{align*}
\overline{\ent}_\FF (\Gamma) & = \limsup_n \frac{\log \vert \Gamma_{F_n\Delta^2}\vert}{\vert F_n \Delta^2 \vert}  \\
& \geq   \limsup_n \frac{\log |X_{{F_n}}|}{\vert F_n \Delta^2\vert}  - \limsup_n \frac{|A|^{|S\Delta|}}{|F_n \Delta^2|}\\
& =   \limsup_n \frac{\log |X_{{F_n}}|}{\vert F_n\vert} \limsup_n \frac{|F_n|}{|F_n \Delta^2|} - 0\\ 
& =  \limsup_n \frac{\log |X_{{F_n}}|}{\vert F_n\vert} \\ 
&= \overline{\ent}_\FF (X). 
\end{align*}
\par 
Hence, $\Gamma$ is a closed subshift of  maximal mean entropy of a strongly irreducible subshift of finite type $X$. It follows that $\Gamma = X$ by \cite[Proposition~4.2]{csc-myhill-strongly-irreducible}. In other words, $\tau$ is surjective. By the Garden of Eden theorem for strongly irreducible subshifts of finite type \cite{fiorenzi}, we deduce that $\tau$ is pre-injective.  
\end{proof}

 \section{Almost pre-injective non-uniform cellular automata}
\label{s:myhill}

 \subsection{Generalized Myhill property}

We begin with the following lemma. 

\begin{lemma}
\label{l:entropy-open-max}
Let $G$ be an infinite amenable group with a given F\o lner net $\FF=(F_i)_{i \in I}$ and let $A$ be a finite alphabet. Then for all subsets $X, Y \subset A^G$ and every finite subset $S \subset G$ such that $X\vert_{G \setminus S} = Y \vert_{G \setminus S}$, we have
\begin{equation*}
    \overline{\ent}_\FF X = \overline{\ent}_\FF Y. 
\end{equation*} 
\par 
In particular, for every nonempty open subset $U \subset A^G$, we have 
\[
\ent_\FF U = {\ent}_\FF A^G = \log |A|.
\] 
\end{lemma}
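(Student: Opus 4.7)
My plan is to compare $|X_{F_i}|$ and $|Y_{F_i}|$ through their common projection onto $A^{F_i \setminus S}$. The hypothesis $X|_{G \setminus S} = Y|_{G \setminus S}$ immediately forces $X_{F_i \setminus S} = Y_{F_i \setminus S}$ for every $i$, so it suffices to control $|Z_{F_i}|$ in terms of $|Z_{F_i \setminus S}|$ with a multiplicative error that is uniformly bounded in $i$. The key observation I would establish is that for any $Z \subset A^G$ the natural restriction $Z_{F_i} \twoheadrightarrow Z_{F_i \setminus S}$ is surjective and each of its fibers embeds in $A^{F_i \cap S}$, which yields
\[
|Z_{F_i \setminus S}| \;\leq\; |Z_{F_i}| \;\leq\; |A|^{|F_i \cap S|}\,|Z_{F_i \setminus S}| \;\leq\; |A|^{|S|}\,|Z_{F_i \setminus S}|.
\]
Applied with $Z = X$ and $Z = Y$, this produces $\bigl|\log|X_{F_i}| - \log|Y_{F_i}|\bigr| \leq |S| \log|A|$.

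Dividing by $|F_i|$ and taking $\limsup$ settles the first claim, provided the error $|S|\log|A|/|F_i|$ vanishes in the limit. This is exactly where I would invoke the standard fact that $|F_i| \to \infty$ along any F\o lner net in an infinite group; if needed I would justify it by observing that boundedness $|F_i| \leq N$ along a cofinal subnet would force each such $F_i$ to be right-invariant under the subgroup generated by any finite $T \subset G$, and then choosing $T$ with $|\langle T \rangle| > N$ (possible since $G$ is infinite) gives a contradiction. The very same sandwich applied to $\liminf$ yields equality of the lower natural mean entropies as a free byproduct.

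For the ``in particular'' statement, I would pick a cylinder $C = \{x \in A^G : x|_E = p\}$ contained in the nonempty open set $U$, with $E \subset G$ finite and $p \in A^E$. Since $C|_{G \setminus E} = A^{G \setminus E} = (A^G)|_{G \setminus E}$, the first part with $S = E$ gives $\overline{\ent}_\FF C = \overline{\ent}_\FF A^G$, and the right-hand side equals $\log|A|$ directly from $|(A^G)_{F_i}| = |A|^{|F_i|}$. A direct count $|C_{F_i}| = |A|^{|F_i| - |E \cap F_i|}$ together with $|F_i| \to \infty$ also yields $\underline{\ent}_\FF C = \log|A|$, so the inclusions $C \subset U \subset A^G$ combined with monotonicity of $Z \mapsto |Z_{F_i}|$ pin down both $\underline{\ent}_\FF U$ and $\overline{\ent}_\FF U$ at the value $\log|A|$. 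The only genuinely non-trivial ingredient in the whole argument is the fact $|F_i| \to \infty$; granting this, the proof is essentially a single sandwich estimate.
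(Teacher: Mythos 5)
Your proof is correct and follows essentially the same route as the paper: a sandwich estimate $|Z_{F_i\setminus S}| \leq |Z_{F_i}| \leq |A|^{|S|}\,|Z_{F_i\setminus S}|$ (the paper packages this via the auxiliary sets $\{p\}\times Z$ and $A^S\times Z$, but the counting is identical), combined with $|F_i|\to\infty$ for an infinite group, and then a cylinder plus monotonicity for the ``in particular'' claim. Your explicit justification that $|F_i|\to\infty$ and your separate treatment of the lower entropy of the cylinder are fine and, if anything, slightly more careful than the paper's write-up.
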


\begin{proof}
   We can suppose without loss of generality that $S \subset F_i$ for all $i \in I$. Since $G$ is infinite, we have 
    $\lim_{I} |F_i| = \infty$ and thus $\limsup_I \frac{|S|}{|F_i|}=0$. 
Let $Z=X\vert_{G \setminus S} = Y\vert_{G \setminus S} \subset A^{G \setminus S}$. Fix a pattern $p \in A^S$ and let us consider $\Gamma = \{p\} \times Z$ and $\Lambda = A^S \times Z$.
\par 
Then for every $i \in I$, we have $\Gamma_{F_i} =\{p\} \times Z_{F_i \setminus S}$ and 
$\Lambda_{F_i} = A^S \times Z_{F_i \setminus S}$.  
On the one hand, we have $X_{F_i}, Y_{F_i} \subset \Lambda_{F_i}$ for every $i \in I$ and thus  by the definition of the mean entropy,  
\[
\overline{\ent}_\FF X = \limsup_I \frac{\log |X_{F_i}|}{|F_i|}\leq  \limsup_I \frac{\log |\Lambda_{F_i}|}{|F_i|} = \overline{\ent}_\FF \Lambda
\]
and similarly $\overline{\ent}_\FF Y \leq \overline{\ent}_\FF \Lambda$. On the other hand, we find that: 
 \begin{align*}
\overline{\ent}_\FF X & = \limsup_I \frac{\log |X_{F_i}|}{|F_i|}   \geq \limsup_I \frac{\log |X_{F_i\setminus S}|}{|F_i|}\\
& = \limsup_I \frac{\log |Z_{F_i \setminus S}|}{|F_i| }   =  \limsup_I \frac{\log |\Gamma_{F_i}|}{|F_i|}  = \overline{\ent}_\FF \Gamma.  
    \end{align*}
Similarly, $\overline{\ent}_\FF Y \geq \overline{\ent}_\FF \Gamma$. However, we have: 
\begin{align*}
\overline{\ent}_\FF \Lambda & = \limsup_I \frac{\log |\Lambda_{F_i}|}{|F_i|}   = \limsup_I \frac{\log |A^S||Z_{F_i}|}{|F_i|} \\
 & =  \limsup_I \frac{\log |Z_{F_i}|}{|F_i|} +  \limsup_I \frac{|S|}{|F_i|} \log |A|  \\
 &=  \limsup_I \frac{\log |\Gamma_{F_i}|}{|F_i|} + 0
 = \overline{\ent}_\FF \Gamma. 
\end{align*} 
    \par 
    Consequently, we have 
    \[
    \overline{\ent}_\FF \Gamma \leq \overline{\ent}_\FF X,\,  \overline{\ent}_\FF Y \leq \overline{\ent}_\FF \Lambda = \overline{\ent}_\FF \Gamma. 
    \]
    We can thus conclude that $ 
   \overline{\ent}_\FF X= \overline{\ent}_\FF Y$. 
For the last assertion, note that every nonempty open subset $U\subset A^G$ contains a cylinder $C(p)= \{x \in A^G \colon x\vert_S= p\}$ where $p \in A^S$ is a finite pattern, i.e., $S\subset G$ is a finite subset. Then  $C(p) \subset U \subset A^G$ thus $\ent C(p) \leq \ent U \leq \ent A^G$. The first assertion applied for $X= C(p)$, $Y=A^G$ shows that 
$\ent C(p)= \ent A^G= \log |A|$. Hence, we deduce that $\ent U= \ent C(P)= \log |A|$ which ends the proof. 
\end{proof}
   \par 
We establish the following general result which generalizes the Myhill property.  

\begin{theorem}
\label{t:GOE-nuca-c-1}
    Let $S \subset G$ be a subset of  an amenable group $G$ and let $A$ be a finite alphabet. 
    Let $p \in A^S$ and $U=\{p\} \times A^{G \setminus S}$. 
    Suppose that $\tau \colon A^G \to A^G$ is an NUCA with finite memory such that $\tau\vert_{U}$ is pre-injective. Then $\overline{\ent}_\FF (\im \tau\vert_{U} ) \geq (1 - \overline{d}_\FF(S)) \log |A|$ for every F\o lner net $\FF$ of $G$.  
\end{theorem}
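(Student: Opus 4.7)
The plan is to mimic the classical counting step that underlies the Myhill half of the Garden of Eden theorem, except that every perturbation must remain inside the cylinder $U = \{p\} \times A^{G \setminus S}$. Let $M \subset G$ be a finite memory of $\tau$, enlarged if necessary so that $1_G \in M$, and fix any reference $x^* \in U$ (extend $p$ arbitrarily off $S$). For each finite $F \subset G$ put
\[
W_F := \{\, q \in A^F : q|_{F \cap S} = p|_{F \cap S} \,\}, \qquad |W_F| = |A|^{|F \setminus S|}.
\]

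To each $q \in W_F$ associate the configuration $\tilde q \in A^G$ equal to $q$ on $F$ and to $x^*$ on $G \setminus F$. Since $\tilde q|_{F \cap S} = p|_{F \cap S}$ and $\tilde q|_{S \setminus F} = x^*|_{S \setminus F} = p|_{S \setminus F}$, we have $\tilde q \in U$. Define
\[
\Psi_F : W_F \longrightarrow (\tau(U))_{F M^{-1}}, \qquad \Psi_F(q) := \tau(\tilde q)\vert_{F M^{-1}}.
\]
For any $g \notin F M^{-1}$ we have $gM \cap F = \varnothing$, so $\tilde q|_{gM} = x^*|_{gM}$ and $\tau(\tilde q)(g) = \tau(x^*)(g)$ is independent of $q$; hence $\tau(\tilde q)$ is fully determined by $\Psi_F(q)$. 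If $q_1 \neq q_2$ in $W_F$, the configurations $\tilde q_1, \tilde q_2$ are distinct asymptotic elements of $U$, so pre-injectivity of $\tau|_U$ forces $\tau(\tilde q_1) \neq \tau(\tilde q_2)$, with the disagreement necessarily inside $F M^{-1}$. Therefore $\Psi_F$ is injective, whence
\[
|(\tau(U))_{F M^{-1}}| \;\geq\; |W_F| \;=\; |A|^{|F \setminus S|}.
\]

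To conclude, apply this with $F = F_i$ and observe that the restriction $(\tau(U))_{F_i M^{-1}} \to (\tau(U))_{F_i}$ has fibers of size at most $|A|^{|F_i M^{-1} \setminus F_i|}$, so
\[
\frac{\log |(\tau(U))_{F_i}|}{|F_i|} \;\geq\; \frac{|F_i \setminus S|}{|F_i|}\,\log|A| \;-\; \frac{|F_i M^{-1} \setminus F_i|}{|F_i|}\,\log|A|.
\]
The second term tends to $0$ by the F\o lner condition (as $M$ is finite), and taking $\limsup$ of the first term yields
\[
\overline{\ent}_\FF(\tau(U)) \;\geq\; \bigl(1 - \underline{d}_\FF(S)\bigr)\log|A| \;\geq\; \bigl(1 - \overline{d}_\FF(S)\bigr)\log|A|,
\]
proving the theorem (indeed a slightly stronger form). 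The only genuine subtlety is to keep the perturbations $\tilde q$ inside $U$, which is exactly what the constraint $q|_{F \cap S} = p|_{F \cap S}$ in the definition of $W_F$ enforces; once this is in place, the argument is a direct transposition of the classical Myhill counting to the non-uniform, partially pre-injective setting.
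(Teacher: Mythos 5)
Your argument is correct and is essentially the paper's proof run forward instead of by contradiction: the same Myhill-type counting (vary freely on $F_i\setminus S$ while keeping $p$ on $S$ and a fixed reference configuration off $F_i$, use pre-injectivity of $\tau\vert_U$ to get an injection into the patterns of $\im \tau\vert_U$ on the $M$-enlarged window, then absorb the boundary term via the F\o lner condition). As a small bonus, your direct formulation yields the marginally stronger bound $\overline{\ent}_\FF(\im \tau\vert_U)\geq (1-\underline{d}_\FF(S))\log|A|$, which implies the stated inequality since $\underline{d}_\FF(S)\leq\overline{d}_\FF(S)$.
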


\begin{proof} 
Our proof generalizes the proof in the case of pre-injective CA over the full shift. 
 Let us choose  a memory set $M\subset G$ of $\tau$ such that $M=M^{-1}$ and $1_G \in M$.  Let $\FF=(F_i)_{i \in I}$ be a F\o lner net of $G$ and we can suppose without loss of generality that  $SM \subset F_i$ for all $i \in I$. \par 
 Since $\overline{\ent}_\FF(\im \tau\vert_{U} ) \geq 0$ and $\overline{d}_\FF(S) \in [0,1]$, we can suppose without loss of generality that $\overline{d}_\FF(S)<1$. 
\par 
Let us denote $\Gamma  =  \im \tau\vert_U $ as the image of $U$ under $\tau$. 
Suppose on the contrary that $\overline{\ent}_\FF \Gamma < (1-\overline{d}_\FF(S)) \log |A|$.
Note that for all $i\in I$ we have: 
\[ 
 \Gamma_{F_iM} \subset \Gamma_{F_i} \times A^{\partial_M F_i}. 
\] 
Therefore, $| \Gamma_{F_iM} |\leq |\Gamma_{F_i}| |A^{\partial_M F_i}|$ and we find that: 
\begin{align} 
\label{e:t:GOE-nuca-c-1-a}
\frac{\log |\Gamma_{F_iM}|} {|F_i\setminus S|} & \leq \frac{\log |\Gamma_{F_i}|}{|F_i\setminus S|}+\frac{|{\partial_M F_i}|\log |A|}{|F_i\setminus S|}.
\\
& \leq \frac{|F_i|}{|F_i\setminus S|}\frac{\log |\Gamma_{F_i }|}{|F_i|}+\frac{|{\partial_M F_i}|}{|F_i|} \frac{|F_i|}{|F_i\setminus S|} \log |A|.\nonumber
\end{align} 
 \par 
Note that since $G$ is an  amenable group,
 \[
 \lim_{i \in I } \frac{|{\partial_M F_i}|}{|F_i|}  =0.
 \]
 \par
 Moreover, by the definition of $\overline{d}_\FF(S)$,  we have: 
 \[
  \limsup_{i \in I}  \frac{|F_i|}{|F_i\setminus S|} =   \limsup_{i \in I}  \frac{1}{\frac{|F_i| - |F_i\cap  S|}{|F_n|}}= \frac{1}{1- \limsup_{i \in I}  \frac{|S\cap F_i|}{|F_i|}} =\frac{1}{1-\overline{d}_\FF(S)}.  
 \]
 \par 
 Hence, by passing to the limit the inequality \eqref{e:t:GOE-nuca-c-1-a} for $i \in I$ in the index set $I$ and by applying the hypothesis $\overline{\ent}_\FF (\im \tau\vert_{U} ) < (1 - \overline{d}_\FF(S)) \log |A|$, we obtain:   
 \begin{align*}
     \limsup_{i\in I} \frac{\log |\Gamma_{F_iM}|}{|F_i \setminus S|}\leq \frac{1}{1-\overline{d}_\FF(S)} \limsup_{i \in I} \frac{\log |\Gamma_{F_i }|}{|F_i|} = \frac{1}{1-\overline{d}_\FF(S)} \overline{\ent}_\FF \Gamma < \log |A|. 
 \end{align*}
\par 
Consequently, there exists  $k \in I$ such that 
\[
\frac{\log |\Gamma_{F_k M}|}{|F_k\setminus S|} < \log |A|
\]
 thus 
\[
|\Gamma_{F_kM }| < |A|^{|F_k\setminus S|}. 
\]
\par 
Let $a \in A$ an arbitrary element and let $Z= \{p\} \times A^{F_k\setminus S} \times \{a\}^{G \setminus (S\cup F_k)}$.  Then $|Z| = | A^{F_k\setminus S}|=|A|^{ {|F_k\setminus S|}}$ and we have: 
\begin{align*}
    |\tau(Z)| = |\tau(Z)_{F_kM}| \leq |\Gamma_{F_kM}| < |A|^{|F_k\setminus S|} = |Z|. 
\end{align*}
\par 
We can thus find two distinct configurations $z_1$ and $z_2$ in $Z$ such that $\tau(z_1)=\tau(z_2)$. In particular, $\tau$ is not pre-injective even when restricted to $U$, which is a contradiction to the hypothesis of the theorem that $\tau\vert_U$ is pre-injective. Hence, we must have $\overline{\ent}_\FF \Gamma \geq (1-\overline{d}_\FF(S)) \log|A|$ and the proof is complete.  
\end{proof}

\par 
As a consequence, we obtain the following surjunctivity-like result which strengthens the Myhill property of CA over   amenable group universes. 
\par 
\begin{corollary}
Let $G$ be an amenable group and let $A$ be a finite alphabet. Let $S\subset G$ with $d_G(S)=0$. 
Let $p \in A^S$ and $U=\{p\} \times A^{G \setminus S}$. Suppose that $\tau \colon A^G \to A^G$ is a CA such that $\tau\vert_{U}$ is pre-injective. Then $\tau$ is surjective and pre-injective. 
\end{corollary}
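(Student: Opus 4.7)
The plan is to bootstrap the corollary from Theorem~\ref{t:GOE-nuca-c-1} together with the classical Garden of Eden (Myhill) theorem for CA over amenable groups. First I would unpack the density hypothesis: since $d_G(S)=0$ bounds from above the upper natural $\FF$-density $\overline{d}_\FF(S)$ via the chain of inequalities in Lemma~\ref{l:chain-densities}, we have $\overline{d}_\FF(S)=0$ for every Folner net $\FF$ of $G$. Fixing any such $\FF$ and viewing the CA $\tau$ as an NUCA with finite memory, Theorem~\ref{t:GOE-nuca-c-1} applied to the open subset $U=\{p\}\times A^{G\setminus S}$ yields
\[
\overline{\ent}_\FF(\im \tau\vert_U) \geq (1-\overline{d}_\FF(S))\log|A| = \log|A|,
\]
which is the maximal possible value.

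Next I would observe that $\im \tau$ is a closed subshift of $A^G$, since $\tau$ is continuous and $G$-equivariant and $A^G$ is compact. As $\im \tau\vert_U \subseteq \im \tau$, monotonicity of $\overline{\ent}_\FF$ under inclusion (immediate from the definition via projections onto $F_i$) forces $\overline{\ent}_\FF(\im \tau) = \log|A|$. Invoking \cite[Proposition~4.2]{csc-myhill-strongly-irreducible}, which identifies the full shift as the unique closed subshift of $A^G$ with maximal mean entropy, I would conclude that $\im \tau = A^G$; that is, $\tau$ is surjective.

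Finally, the classical Garden of Eden theorem for CA over amenable groups asserts that every surjective CA on $A^G$ is pre-injective, so $\tau$ is pre-injective as required. The entire argument is short because the substantive estimate has already been performed in Theorem~\ref{t:GOE-nuca-c-1}: the only technical issue, minor and essentially notational, is verifying the implication $d_G(S)=0 \Rightarrow \overline{d}_\FF(S)=0$ together with the monotonicity of $\overline{\ent}_\FF$, after which the generalized Myhill bound saturates to $\log|A|$ and the classical maximal-entropy dichotomy closes the argument.
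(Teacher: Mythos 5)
Your proposal is correct and follows essentially the same route as the paper: apply Theorem~\ref{t:GOE-nuca-c-1} with $\overline{d}_\FF(S)\le \overline{D}_G(S)=0$ to get $\overline{\ent}_\FF(\im\tau\vert_U)=\log|A|$, use monotonicity to conclude $\overline{\ent}_\FF(\im\tau)=\log|A|$, and then deduce surjectivity and pre-injectivity from the maximal-entropy characterization of the full shift together with the classical Garden of Eden theorem. The only difference is that you make explicit the appeal to \cite[Proposition~4.2]{csc-myhill-strongly-irreducible}, which the paper subsumes under the phrase ``classical Garden of Eden theorem''; this is a harmless (indeed clarifying) elaboration, not a different argument.
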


\begin{proof}
 Since $d_G(S)=0$ and $\tau\vert_U$ is pre-injective by hypotheses,  we infer from Theorem~\ref{t:GOE-nuca-c-1} that for every F\o lner net $\FF$ of $G$: 
    \[
    \log |A| = \overline{\ent}_\FF (A^G) \geq \overline{\ent}_\FF (\im \tau ) \geq \overline{\ent}_\FF (\im \tau\vert_{U}  ) \geq \log |A|.
    \]
    \par 
    Consequently, $\overline{\ent}_\FF (\im \tau  ) = \log |A|$.   Therefore, the classical Garden of Eden theorem for CA implies that $\tau$ is surjective and pre-injective. The proof is thus complete. 
 \end{proof}

\par 
The next corollary implies  that the image of every sufficiently pre-injective NUCA over a nonempty open subset must have maximal mean entropy.  

\begin{corollary}
\label{l:GOE-nuca-c-1}
    Let $G$ be an infinite amenable group and let $A$ be a finite alphabet. Suppose that $\tau \colon A^G \to A^G$ is an NUCA with finite memory such that $\tau\vert_{U}$ is pre-injective where $U \subset A^G$ is a nonempty open subset. Then $\overline{\ent}_\FF (\im \tau\vert_{U} ) = \log |A|$ for every F\o lner net $\FF$ of $G$. 
\end{corollary}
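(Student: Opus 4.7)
The plan is to reduce the corollary to Theorem \ref{t:GOE-nuca-c-1} by replacing the arbitrary nonempty open set $U$ with a cylinder of finite base. First I would use the fact that, by definition of the prodiscrete topology, any nonempty open $U \subset A^G$ contains a cylinder $C(p) = \{x \in A^G : x\vert_S = p\}$ for some finite subset $S \subset G$ and some pattern $p \in A^S$. Since $\tau\vert_U$ is pre-injective by hypothesis and $C(p) \subset U$, the restriction $\tau\vert_{C(p)}$ is also pre-injective.

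Next I would apply Theorem \ref{t:GOE-nuca-c-1} to the set $C(p) = \{p\} \times A^{G \setminus S}$, which is exactly of the form required in that theorem with $S$ finite. This yields, for every F\o lner net $\FF = (F_i)_{i \in I}$ of $G$,
\[
\overline{\ent}_\FF(\im \tau\vert_{C(p)}) \geq (1 - \overline{d}_\FF(S)) \log|A|.
\]
Since $G$ is infinite, any F\o lner net satisfies $\lim_{i \in I} |F_i| = \infty$, so the estimate $|S \cap F_i|/|F_i| \leq |S|/|F_i| \to 0$ gives $\overline{d}_\FF(S) = 0$. Consequently $\overline{\ent}_\FF(\im \tau\vert_{C(p)}) \geq \log|A|$.

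Finally, the inclusions $\im \tau\vert_{C(p)} \subset \im \tau\vert_U \subset A^G$ combined with monotonicity of upper natural mean entropy (immediate from $X \subset Y \Rightarrow X_{F_i} \subset Y_{F_i}$) give
\[
\log|A| \leq \overline{\ent}_\FF(\im \tau\vert_{C(p)}) \leq \overline{\ent}_\FF(\im \tau\vert_U) \leq \overline{\ent}_\FF(A^G) = \log|A|,
\]
forcing equality throughout. There is no serious obstacle: the corollary is essentially a packaging of Theorem \ref{t:GOE-nuca-c-1} in the case where the ``restricted coordinate'' set $S$ is finite, and the only point requiring a sentence of justification is that infiniteness of $G$ forces $|F_i| \to \infty$ along any F\o lner net, which makes every finite set $\FF$-null.
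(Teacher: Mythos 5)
Your proposal is correct and follows essentially the same route as the paper: extract a cylinder $C(p)=\{p\}\times A^{G\setminus S}$ with finite base $S$ inside $U$, note that $\tau\vert_{C(p)}$ inherits pre-injectivity, observe $\overline{d}_\FF(S)=0$ because $G$ is infinite forces $|F_i|\to\infty$, apply Theorem~\ref{t:GOE-nuca-c-1}, and conclude by monotonicity of $\overline{\ent}_\FF$ via $\im\tau\vert_{C(p)}\subset\im\tau\vert_U\subset A^G$. No gaps; this matches the paper's argument step for step.
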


\begin{proof}
Since $U$ is an open of $A^G$ in the prodiscrete topology, it contains a cylinder $C(p)=\{x \in A^G\colon x_S=p\}$ for some pattern $p \in A^S$ where $S\subset G$ is a finite subset. It follows that $\tau\vert_{C(p)}$ is pre-injective. On the other hand, $C(p)=\{p\} \times A^{G \setminus S}$ and we clearly have $\overline{d}_\FF(S)=0$ since $S$ is finite and $G$ is infinite so that $\lim_{i \in I} |F_i|=\infty$ for any fixed F\o lner net $\FF=(F_i)_{i \in I}$ of $G$. We can thus infer from Theorem~\ref{t:GOE-nuca-c-1} that 
    \[
    \log |A| = {\ent} (A^G) \geq \overline{\ent}_\FF (\im \tau\vert_{U} ) \geq \overline{\ent}_\FF (\im \tau\vert_{C(p)} ) \geq \log |A|.
    \]
    \par 
    Therefore, $\overline{\ent}_\FF (\im \tau\vert_{U} ) = \log |A|$ and the proof is complete. 
 \end{proof}
\par 
 
By specializing to the class of CA, we deduce the following result. 

\begin{corollary} 
Let $G$ be an infinite amenable group and let $A$ be a finite alphabet. Suppose that $\tau \colon A^G \to A^G$ is a CA such that $\tau\vert_{U}$ is pre-injective where $U\subset A^G$ is an open subset. Then $\tau$ is surjective. 
\end{corollary}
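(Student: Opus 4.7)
The plan is to chain the preceding Corollary~\ref{l:GOE-nuca-c-1} with the closedness and shift-invariance of $\im\tau$, and then invoke the standard maximal-entropy criterion for subshifts that the paper has already used (Proposition~4.2 of \cite{csc-myhill-strongly-irreducible}). No new combinatorial argument is needed; the work has been done in Theorem~\ref{t:GOE-nuca-c-1} and its open-set corollary.

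First, I would apply Corollary~\ref{l:GOE-nuca-c-1} directly: since $\tau$ is in particular an NUCA with finite memory and $\tau\vert_U$ is pre-injective on the nonempty open set $U$, we obtain $\overline{\ent}_\FF(\im\tau\vert_U)=\log|A|$ for every F\o lner net $\FF$ of $G$. As $\im\tau\vert_U\subset \im\tau\subset A^G$, monotonicity of upper mean $\FF$-entropy gives
\[
\log|A|=\overline{\ent}_\FF(\im\tau\vert_U)\leq \overline{\ent}_\FF(\im\tau)\leq \overline{\ent}_\FF(A^G)=\log|A|,
\]
so $\overline{\ent}_\FF(\im\tau)=\log|A|$.

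Next, I would use the fact that $\tau$ is a genuine (uniform) CA: it is $G$-equivariant and continuous on the compact space $A^G$, so $\im\tau$ is a closed $G$-invariant subset of $A^G$, i.e.\ a closed subshift. For closed subshifts, upper and lower mean entropies agree (as recorded in the lemma in Section~\ref{s:banach-entropy}), hence $\ent_\FF(\im\tau)=\log|A|$ is well-defined and maximal. Then Proposition~4.2 of \cite{csc-myhill-strongly-irreducible}, applied exactly as in the discussion right after Theorem~\ref{t:GOE-nuca-c-1-intro}, forces a closed subshift of maximal mean entropy to coincide with the whole full shift. Therefore $\im\tau=A^G$, i.e.\ $\tau$ is surjective.

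There is no real obstacle: the only subtle point is to make sure that the uniformity hypothesis on $\tau$ is what allows us to upgrade ``$\im\tau$ has maximal mean entropy'' (which is all Corollary~\ref{l:GOE-nuca-c-1} gives in the non-uniform setting) to ``$\im\tau=A^G$''. This is precisely where the example in the introduction, where a non-uniform $\tau$ can have $\im\tau$ strictly smaller than $A^G$ while still being pre-injective on an open set, fails to contradict the CA case: for NUCA, $\im\tau$ need not be a closed subshift, so Proposition~4.2 of \cite{csc-myhill-strongly-irreducible} cannot be invoked.
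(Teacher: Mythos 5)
Your proposal is correct and follows essentially the same route as the paper: apply Corollary~\ref{l:GOE-nuca-c-1} to get $\overline{\ent}_\FF(\im \tau\vert_U)=\log|A|$, use monotonicity to conclude $\overline{\ent}_\FF(\im\tau)=\log|A|$, and then conclude surjectivity. The only cosmetic difference is the last step: the paper invokes ``the classical Garden of Eden theorem'' at maximal entropy, whereas you note that $\im\tau$ is a closed subshift and cite \cite[Proposition~4.2]{csc-myhill-strongly-irreducible}, which is exactly the argument the paper itself uses right after Theorem~\ref{t:GOE-nuca-c-1-intro}, so this is the same machinery rather than a genuinely different proof.
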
 

\begin{proof}
    By Lemma~\ref{l:GOE-nuca-c-1}, we have $\overline{\ent}_\FF(\im \tau\vert_U)=\log|A|$  for any F\o lner net $\FF$ of $G$. Since $\im \tau\vert_U \subset \im \tau \subset A^G$ and $\ent A^G= \log |A|$, we deduce from the monotonicity of the mean entropy that 
    $\overline{\ent}_\FF \im \tau = \log|A|$. Hence, the classical Garden of Eden theorem for CA implies that $\tau$ is surjective. 
\end{proof}

\subsection{Generalized Moore property for NUCA}

Conversely, we have: 

\begin{lemma}
\label{l:GOE-nuca-c-2}
    Let $G$ be an infinite amenable group and let $A$ be a finite alphabet.  Let $\tau \colon A^G \to A^G$ be an NUCA with finite memory whose configuration of the local transition maps is asymptotically constant. Suppose that $\overline{\ent}_\FF (\im \tau ) = \log |A|$ for some F\o lner net $\FF$ of $G$. Then there exists an open subset $U\subset A^G$ such that $\tau\vert_U$ is pre-injective. Moreover,  $\im \tau$ contains an open subset of $A^G$. 
\end{lemma}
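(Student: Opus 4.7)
The plan is to exploit the associated cellular automaton. Let $E \subset G$ be the finite set outside which the local-rule configuration $s$ of $\tau$ is constantly equal to some $s_0 \in S$, and let $\tau_0$ be the CA with constant local rule $s_0$. With common memory $M = M^{-1} \ni 1_G$, one has $\tau(x)|_{G \setminus E} = \tau_0(x)|_{G \setminus E}$ for all $x \in A^G$, hence $(\im \tau)|_{G \setminus E} = (\im \tau_0)|_{G \setminus E}$. By Lemma~\ref{l:entropy-open-max} with the finite set $E$ playing the role of $S$, $\overline{\ent}_\FF(\im \tau_0) = \overline{\ent}_\FF(\im \tau) = \log|A|$, so $\im \tau_0$ is a closed subshift of maximal mean entropy. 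Proposition~4.2 of~\cite{csc-myhill-strongly-irreducible} then forces $\im \tau_0 = A^G$; thus $\tau_0$ is surjective and, by the classical Garden of Eden theorem for CA over amenable groups, $\tau_0$ is pre-injective.

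For the first assertion, fix any $p \in A^{EM}$ and let $U = C(p) = \{x \in A^G : x|_{EM} = p\}$, a non-empty open subset of $A^G$. If $x_1, x_2 \in U$ are asymptotic with $\tau(x_1) = \tau(x_2)$, then the agreement of $\tau$ and $\tau_0$ off $E$ gives $\tau_0(x_1)|_{G \setminus E} = \tau_0(x_2)|_{G \setminus E}$, while the common restriction $x_i|_{EM} = p$ forces $\tau_0(x_1)|_E = \tau_0(x_2)|_E$, since those values depend only on $p$. Hence $\tau_0(x_1) = \tau_0(x_2)$, and pre-injectivity of $\tau_0$ gives $x_1 = x_2$, so $\tau|_U$ is pre-injective.

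For the second assertion, let $W = \{h \in G : hM \subset EM\}$, a finite set containing $E$ because $1_G \in M$. For $x \in U$ and $h \in W$ the value $\tau(x)(h) = s(h)(p|_{hM})$ is determined by $p$ alone, so there is a pattern $q \in A^W$ with $\tau(x)|_W = q$ for every $x \in U$; consequently $\tau(U) \subset C(q)$, a non-empty open cylinder. I claim $C(q) \subset \tau(U)$, which places the open cylinder $C(q)$ inside $\im \tau$. Given $y \in C(q)$, the task of finding $x \in U$ with $\tau(x) = y$ reduces, using $W \supset E$ and $\tau \equiv \tau_0$ off $E$, to solving $\tau_0(p \cup \tilde x)|_{G \setminus W} = y|_{G \setminus W}$ for some $\tilde x \in A^{G \setminus EM}$. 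The main obstacle is this ``conditional surjectivity'' of $\tau_0$ subject to a prescribed boundary value $p$ on $EM$; the plan is to start from an unconditional preimage $z \in \tau_0^{-1}(y)$ furnished by the surjectivity of $\tau_0$, and to reconcile $z$ with the prescribed values $p$ on $EM$ by an Ornstein-Weiss quasi-tiling of $G \setminus EM$ (Lemma~\ref{l:ornstein-weiss}), iteratively correcting $z$ tile by tile so that the output of $\tau_0$ outside $W$ remains equal to $y$; consistency of the corrections across tiles is enforced by the pre-injectivity of $\tau_0$ together with the classical Garden of Eden estimate applied within each tile.
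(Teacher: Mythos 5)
Your first paragraph and the pre-injectivity part are correct and essentially the paper's own argument (pass to the associated CA $\tau_0$, use Lemma~\ref{l:entropy-open-max} to transfer maximal entropy, get $\tau_0$ surjective and pre-injective, and observe that on the cylinder $U=C(p)$ over $EM$ one has $\tau(x_1)=\tau(x_2)\Rightarrow\tau_0(x_1)=\tau_0(x_2)$). The second assertion, however, has a genuine gap: the key claim $C(q)\subset \tau(U)$ is false in general. Take $G=\Z$, $A=\{0,1\}$, symmetric memory $M=\{-1,0,1\}$, constant rule $s_0(a_{-1},a_0,a_1)=a_0+a_1 \bmod 2$, and let the rule at the single cell $0$ be $a_0+a_1+1 \bmod 2$, so $E=\{0\}$; this $\tau$ is surjective (it is the XOR automaton followed by flipping the output at coordinate $0$), so the entropy hypothesis holds. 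Here $EM=\{-1,0,1\}$, and with $p=000$ one gets $W=\{h\colon hM\subset EM\}=\{0\}$ and $q(0)=1$. But every $x\in U$ satisfies $\tau(x)(-1)=x(-1)+x(0)=0$, so any $y$ with $y(0)=1$ and $y(-1)=1$ lies in $C(q)$ yet not in $\tau(U)$. The underlying problem is that prescribing $x\vert_{EM}=p$ constrains the outputs of $\tau$ at all cells $h$ with $hM\cap EM\neq\varnothing$ (i.e. $h\in EM^2$), not only at the cells of $W$, and these constraints are not recorded by the pattern $q$; in the example the output at $-1$ is frozen by $p$ although $-1\notin W$. So the ``conditional surjectivity'' you want to establish is simply unavailable, and the concluding quasi-tiling correction scheme — which is in any case only a sketch, with no actual argument for consistency across tiles — is attempting to prove a false statement.

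The statement can still be salvaged, but by a different mechanism, which is what the paper does: drop the attempt to realize a prescribed input pattern and instead argue by pigeonhole over the finitely many patterns on $EM^2$. Since $\tau$ and $\tau_0$ agree off $E$ and $\tau_0$ is surjective, $(\im\tau)_{G\setminus EM^2}=A^{G\setminus EM^2}$; writing $\im\tau=\bigcup_{q\in A^{EM^2}} Z_q$ with $Z_q=\im\tau\cap(\{q\}\times A^{G\setminus EM^2})$ and invoking the closed image property of NUCA with finite memory, the finitely many closed sets $(Z_q)_{G\setminus EM^2}$ cover the compact space $A^{G\setminus EM^2}$, so some $(Z_q)_{G\setminus EM^2}$ contains a cylinder $V$, whence $\{q\}\times V\subset\im\tau$. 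Note that the closedness of $\im\tau$ is essential for this step (a finite union of non-closed sets can cover the space with none containing a cylinder), and it plays no role in your proposal.
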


\begin{proof}
    Let  $\sigma \colon A^G \to A^G$ be a CA whose local transition map coincides with the local transition maps of $\tau$ outside of some finite subset $E \subset G$. Consider the cylinder $U= \{p\} \times A^{G \setminus EM} $ where $M$ is a memory set of $\tau$ such that $M=M^{-1}$,  $1_G \in M$, and $p \in A^{EM}$ is an arbitrary pattern. 
    \par 
 Then by Lemma~\ref{l:entropy-open-max}, we have  $\overline{\ent}_\FF \im \sigma = \overline{\ent}_\FF  \im \tau$ since 
$(\im \sigma )_{G \setminus EM} = (\im \tau )_{G \setminus EM}$. Therefore, $\overline{\ent}_\FF \im \sigma = \log |A|$ and thus by the Garden of Eden theorem, we deduce that  $\sigma$ is surjective and pre-injective. Consequently, $\tau$ is also pre-injective when restricted to the open subset $U \subset A^G$. This proves the first assertion. 
 \par
 For the second assertion, note that $\im (\tau)_{G \setminus EM^2} = \im (\sigma)_{G \setminus EM^2} = A^{EM^2}$ since $\sigma$ is surjective. For every $q \in A^{EM^2}$, we define  
 \[
 Z_q = \im (\tau)  \cap (\{q\} \times A^{G \setminus EM^2}).
 \]
 \par 
 In other words, $Z_q$ is the set of configurations in the image of $\tau$ whose restriction to $EM^2$ is exactly the pattern $q$. Since $\im (\tau)$ is a closed subset of $A^G$ by the closed image property, we deduce that $Z_q$ and $(Z_q)_{G\setminus EM^2}$ are respectively closed (in fact compact) subsets  of $A^G$ and $A^{G \setminus EM^2}$. 
 \par 
Clearly we have the relation $\im(\tau)=\bigcup_{q \in A^{EM^2}} Z_q$ and it follows that 
 \[
 \bigcup_{q \in A^{EM^2}} (Z_q)_{G\setminus EM^2} = \im (\tau)_{G \setminus EM^2}  = A^{G \setminus EM^2}. 
 \]
 \par 
 By the Baire category theorem applied to the non-empty complete metric space $A^{G\setminus EM^2}$, there must exist $q \in A^{EM^2}$ such that $(Z_q)_{G\setminus EM^2}$ contains a cylinder $V$ of  $A^{G\setminus EM^2}$ in the prodiscrete topology. We conclude  by our definition of the set $Z_q$ that $\im(\tau)$ contains the open cylinder $V \times \{q\} \subset A^G$. The proof is thus complete. 
\end{proof}

\par 
The following example shows that the above Lemma~\ref{l:GOE-nuca-c-2} is optimal in the sense that it fails if we remove the hypothesis  that $\tau$ is a local perturbation of a CA.  

\begin{example}
    Let $G= \Z$ and let    
    $\tau \colon \{0,1\}^G \to \{0,1\}^G$ be an NUCA such that for all $x \in A^G$ and $n \in \Z$, we have $\tau(x)(n)= 0$ if $n$ is a perfect square and $\tau(x)(n)=x(n)$ otherwise. A direct computation shows that $\ent (\im \tau)= 1= \log |\{0,1\}|$. However, $\tau$ is not pre-injective when restricted to any cylinder subset of $\{0,1\}^G$. 
\end{example}

\subsection{Garden of Eden theorem  for the class $\mathrm{NUCA}_c$} 
\par 
By combining Lemma~\ref{l:GOE-nuca-c-1} 
and Lemma~\ref{l:GOE-nuca-c-2}, we can establish the following version of the Garden of Eden theorem for NUCA which are local perturbations of CA over infinite amenable group universes. 
\begin{theorem}
\label{t:GOE-nuca-c-v1} 
Let $G$ be an infinite amenable group and let $A$ be a finite alphabet. Let $\tau \colon A^G \to A^G$ be an NUCA with finite memory and whose configuration of the local transition maps is asymptotically constant. Then the following are equivalent: 
\begin{enumerate}[\rm (a)] 
\item  $\tau\vert_U$ is pre-injective for some nonempty open subset $U\subset A^G$. 
    \item $\overline{\ent}_\FF (\im \tau ) = \log |A|$ for some (or equivalently any)  F\o lner net $\FF$ of $G$. 
    \item  $\im \tau$ contains a nonempty open subset of $A^G$. 
\end{enumerate}
\end{theorem}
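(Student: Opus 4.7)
The plan is to close the cycle (a)$\Rightarrow$(b)$\Rightarrow$(c)$\Rightarrow$(b)$\Rightarrow$(a) using the two preceding lemmas together with the monotonicity of mean entropy and the fact (established in Lemma~\ref{l:entropy-open-max}) that any nonempty open subset of $A^G$ has maximal mean $\FF$-entropy $\log|A|$ with respect to every F\o lner net $\FF$. The role of the asymptotic constancy hypothesis is confined to the implication (b)$\Rightarrow$(a),(c) where the comparison with a genuine CA $\sigma$ is needed; the other implications are general.

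First, for (a)$\Rightarrow$(b) I would apply Lemma~\ref{l:GOE-nuca-c-1} directly to obtain $\overline{\ent}_\FF(\im \tau\vert_U) = \log|A|$ for \emph{every} F\o lner net $\FF$ of $G$. Since $\im \tau\vert_U \subset \im \tau \subset A^G$, monotonicity of the upper mean $\FF$-entropy sandwiches $\overline{\ent}_\FF(\im\tau)$ between $\log|A|$ and $\ent_\FF(A^G) = \log|A|$, so (b) holds for every F\o lner net. For (c)$\Rightarrow$(b), any nonempty open $V\subset \im \tau$ contains a cylinder $C(p)$ with $p \in A^S$ for some finite $S \subset G$; by the last assertion of Lemma~\ref{l:entropy-open-max}, $\ent_\FF(V) = \log|A|$ for every F\o lner net, and monotonicity again yields (b) universally. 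These two implications together justify the parenthetical ``some or equivalently any'' in the statement of (b).

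Finally, for (b)$\Rightarrow$(a) and (b)$\Rightarrow$(c), Lemma~\ref{l:GOE-nuca-c-2} does exactly the work needed: assuming $\overline{\ent}_\FF(\im \tau) = \log|A|$ for some F\o lner net, it produces both a nonempty open cylinder $U = \{p\} \times A^{G \setminus EM}$ on which $\tau$ restricts to a pre-injective map and a nonempty open cylinder $V \times \{q\} \subset \im \tau$. So the two nontrivial implications are already encapsulated in that lemma, and no further argument is required.

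The main conceptual point — and the step one should be alert to — is that the equivalence of ``some'' vs.\ ``any'' F\o lner net in condition (b) is not automatic for arbitrary subsets of $A^G$ (the upper natural $\FF$-entropy does depend on $\FF$ in general); it holds here only because both (a) and (c) independently force $\overline{\ent}_\FF(\im \tau)$ to attain its maximum $\log|A|$ against every F\o lner net. Once this observation is in place, the proof is a clean three-line cycle.
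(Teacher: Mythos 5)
Your proposal is correct and follows essentially the same route as the paper: (a)$\Rightarrow$(b) via Corollary~\ref{l:GOE-nuca-c-1} and monotonicity, (b)$\Rightarrow$(a),(c) via Lemma~\ref{l:GOE-nuca-c-2}, and (c)$\Rightarrow$(b) via Lemma~\ref{l:entropy-open-max}, with the asymptotic-constancy hypothesis used only where the comparison CA is needed. Your explicit remark that the ``some vs.\ any'' equivalence in (b) comes from (a) and (c) each forcing maximal entropy against every F\o lner net is a point the paper leaves implicit, but it is the same argument.
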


\begin{proof}
Suppose first that (a) holds for some open subset $U \subset A^G$. Then 
$\overline{\ent}_\FF (\im \tau\vert_U ) = \log |A|$ by Lemma~\ref{l:GOE-nuca-c-1}. Since 
$\im ( \tau\vert_U )\subset \im (\tau) \subset A^G$, the monotonicity of the mean entropy implies that: 
\[
\log |A| = \overline{\ent}_\FF \im ( \tau\vert_U )\leq \overline{\ent}_\FF \im (\tau) \leq \ent A^G = \log |A|. 
\]
\par 
Consequently, $\overline{\ent}_\FF \im (\tau)= \log|A|$ and we have proved that (a)$\implies$(b). The implications (b)$\implies$(a) and (b)$\implies$(c) result directly from Lemma~\ref{l:GOE-nuca-c-2}. Suppose that $\im (\tau)$ contains an open subset of $A^G$ then since $G$ is infinite, we have $\overline{\ent}_\FF \im(\tau) = \log |A|$ by Lemma~\ref{l:entropy-open-max}. Hence, (c)$\implies$(b) and the proof is complete. 
\end{proof}

\begin{proof}[Proof of Theorem~\ref{t:GOE-nuca-c-v1-intro}] 
The case when $G$ is infinite results from Theorem~\ref{t:GOE-nuca-c-v1}. When $G$ is finite, the properties (i) and (ii) in Theorem~\ref{t:GOE-nuca-c-v1-intro} hold true trivially. The proof of Theorem~\ref{t:GOE-nuca-c-v1-intro} is thus complete. 
\end{proof}

\section{Garden of Eden theorem for linear NUCA}
\label{s:GOE-linear-NUCA}
The Garden of Eden theorem for linear CA over the full shifts was first proved in  \cite{csc-goe-linear}. It was known to characterize amenable groups \cite{bartholdi-kielak} and the limit set of a CA \cite{cscp-jpaa}.  
\par 
For linear NUCA which are uniform outside a zero-density subset of cells, we obtain the following converse of Theorem~\ref{t:GOE-nuca-c-1}. 

\begin{theorem}
\label{t:linera-GOE-nuca-mdim-pre}
let $K$ be a field and let $M$ be a finite subset of an infinite countable amenable group $G$. Fix $s,c \in \LL(K^M, K)^G$ with $c$  constant and $s$ differs from $c$ only on a subset of $G$ of zero upper Banach  density.  Suppose that $\underline{\mDim} (\im \sigma_s ) > 1-d$ for $d\in ]0,1]$.   Then there exists $U=\{0\}^S \times K^{G \setminus S}$ where  $S\subset G$ with $\overline{D}_G(S)\leq d$ such that $\sigma_s\vert_U$ is pre-injective. 
\end{theorem}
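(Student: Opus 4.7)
The strategy is to construct $S$ as a union of local ``pivot'' sets, indexed by tiles of an Ornstein--Weiss quasi-tiling of $G$, together with the small uncovered region of that tiling. The two main ingredients are a rank--nullity argument that bounds the size of each pivot set from above, and the quasi-tiling (Lemma~\ref{l:ornstein-weiss}) which assembles the local pivots into a global $S$ with controlled Banach density while still meeting the support of every finitely supported kernel element.

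For the local bound, I exploit the hypothesis on $s$: since $s$ agrees with the constant $c$ outside a set $E\subset G$ with $\overline{D}_G(E)=0$, one has $|\dim(\im\sigma_s)_F-\dim(\im\sigma_c)_F|\le|F\cap E|$ for every finite $F$, and $\sup_g|gF\cap E|/|F|\to 0$ as $F$ runs through a F\o lner sequence. Combined with the shift-invariance of $\im\sigma_c$ and the hypothesis $\underline{\mDim}(\im\sigma_s)>1-d$, this yields some $\epsilon>0$ and a F\o lner set $F$ (taken far enough in the F\o lner sequence that $|FM|/|F|$ and $|F|/|F^{-M}|$ are both within $\epsilon/4$ of~$1$) satisfying
\[
\dim(\im\sigma_s)_{gF}>(1-d+\epsilon)\,|F|\qquad\text{for every }g\in G.
\]
For each $g$, the subspace $K_{gF}^{\sigma}:=\bigl\{y\in K^{gFM}:\sigma_s(y^{\mathrm{ext}})|_{gF}=0\bigr\}$ is the kernel of the finite-dimensional map $y\mapsto\sigma_s(y^{\mathrm{ext}})|_{gF}$ whose image is precisely $(\im\sigma_s)_{gF}$, so rank--nullity gives $\dim K_{gF}^{\sigma}<(d-\epsilon/2)\,|F|$. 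Gauss--Jordan pivoting then supplies $L_g\subset gFM$ with $|L_g|=\dim K_{gF}^{\sigma}$ such that the coordinate projection $K_{gF}^{\sigma}\to K^{L_g}$ is an isomorphism, equivalently $K_{gF}^{\sigma}\cap K^{gFM\setminus L_g}=\{0\}$.

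Now fix $\epsilon_1>0$ much smaller than $\epsilon$ and apply Lemma~\ref{l:ornstein-weiss} with memory $M$ to produce an $\epsilon_1$-disjoint, $(1-\epsilon_1)$-covering quasi-tiling $\{(T_j,T_j^{o})\}_{j\in J}$ of $G$ with $T_j=g_jF$ and pairwise disjoint interiors $T_j^{o}=g_j F^{-M}$. Set
\[
S\;:=\;\Bigl(\bigcup_{j\in J}L_{g_j}\Bigr)\,\cup\,\Bigl(G\setminus\bigcup_{j\in J}T_j\Bigr).
\]
The uncovered piece has Banach density at most $\epsilon_1$. For the pivot piece, the disjointness of the $T_j^{o}$ bounds the number of tiles whose $L_{g_j}$ meets a given F\o lner window $F'g'$ by $|F'|/((1-\epsilon_1)|F|)$ up to $o_{|F'|}(1)$, and each such tile contributes at most $(d-\epsilon/2)\,|F|$ coordinates, giving $\overline{D}_G(S)\le(d-\epsilon/2)/(1-\epsilon_1)+\epsilon_1\le d$ once $\epsilon_1$ is taken small enough (e.g.\ $\epsilon_1<\epsilon/(2(1+d))$).

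For pre-injectivity of $\sigma_s|_U$ with $U=\{0\}^{S}\times K^{G\setminus S}$, linearity reduces the task to showing that every finitely supported $z\in K^G$ with $z|_S=0$ and $\sigma_s(z)=0$ vanishes. Since $G\setminus\bigcup_j T_j\subset S$, the condition $z|_S=0$ forces $\supp z\subset\bigcup_j T_j$. For each $j$, setting $y:=z|_{T_jM}$ and using $hM\subset T_jM$ for every $h\in T_j$ one verifies that $\sigma_s(y^{\mathrm{ext}})|_{T_j}=\sigma_s(z)|_{T_j}=0$, so $y\in K_{T_j}^{\sigma}$; coupled with $y|_{L_{g_j}}=0$, the pivot property above forces $y=0$ and hence $z|_{T_j}=0$ for every $j$. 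Together with $\supp z\subset\bigcup_j T_j$, this yields $z=0$. The main obstacle is exactly this coordination between the local pivot choices and the tiling: including the uncovered region in $S$ is precisely what confines $\supp z$ to the tiled part of $G$ and reduces the global pre-injectivity problem to the local one already solved at each tile, while the density bound hinges on balancing the quasi-tiling parameter $\epsilon_1$ against the spectral gap $\epsilon$ furnished by the mean-dimension hypothesis.
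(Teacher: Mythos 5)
Your overall architecture (rank--nullity on a finite window, Gauss--Jordan pivots, an Ornstein--Weiss quasi-tiling assembling the pivots into $S$ together with the uncovered region) is the same as the paper's, and your pre-injectivity step is correct: restricting a finitely supported kernel element $z$ to $T_jM$ and invoking the pivot property tile by tile works and does not even need disjointness of the cores. The genuine gap is in the density estimate $\overline{D}_G(S)\leq d$. Your pivot sets $L_{g_j}$ live in $g_jFM$, not in the pairwise disjoint cores $T_j^o=g_jF^{-M}$. If $L_{g_j}$ meets a window $F'g'$, all you can say is $g_j\in F'g'M^{-1}F^{-1}$ and hence $T_j^o\subset F'g'K$ with $K=M^{-1}F^{-1}F^{-M}$ a fixed finite set built from $F$ and $M$; disjointness of the cores then bounds the number of contributing tiles by $|F'g'K|/|F^{-M}|$, not by $|F'|/((1-\epsilon_1)|F|)+o(|F'|)$. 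To pass from the former to the latter you need $\sup_{g'}|F'g'K|/|F'|\to 1$ along the F\o lner sequence, but $|F'g'k\setminus F'g'|=|F'\setminus F'(g'k^{-1}g'^{-1})|$ is the F\o lner defect at the conjugate $g'k^{-1}g'^{-1}$, which is not uniformly small in $g'\in G$ for a general (non-abelian) countable amenable group; your count is therefore off by a factor comparable to $|K|$, which destroys the estimate. (Your counting is fine for $G=\Z^d$, but the theorem is for arbitrary countable amenable $G$.) Note also that the easy repair of bounding instead $\underline{D}_G(G\setminus S)$ fails for your $S$, because the pivots of one tile may invade the cores of neighbouring tiles; and, a minor point, Lemma~\ref{l:ornstein-weiss} produces $r$ shapes $F_{n_1},\dots,F_{n_r}$ rather than a single shape $F$, which is harmless but should be handled.

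The paper avoids exactly this obstruction by a different local set-up: instead of the full window map $K^{gFM}\to K^{gF}$, it takes the map $\varphi_i$ defined only on configurations supported in $E_iM$, where $E_i=T_i^{-M^2}$, with target $K^{R_i}$ for $R_i\subset E_i$ (with $R_i$ also avoiding $\Delta\cup g_i\Delta$ to exploit near-constancy). Its image is still $\Gamma_{R_i}$, so rank--nullity gives the same kernel bound, while the kernel --- and hence the pivot set $S_i$ --- is now forced into $E_iM\subset T_i^o$, i.e.\ inside the pairwise disjoint cores. The density bound then comes not from window counting but from Lemma~\ref{l:technical-lem-density} applied to the quasi-tiling $\{(T_i,\,E_iM\setminus S_i)\}_{i}$, giving $\underline{D}_G\bigl(\bigcup_i (E_iM\setminus S_i)\bigr)\geq 1-d$, and from $\overline{D}_G(S)=1-\underline{D}_G(G\setminus S)$. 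To repair your argument you should confine the pivots to the cores in this way (you cannot simply demand $L_{g_j}\subset T_j^o$ for your kernel, since a pivot set cannot be prescribed inside an arbitrary subset of coordinates), or else restrict the statement to abelian universes. A last remark: under the hypothesis $\underline{\mDim}(\im\sigma_s)>1-d$ the uniform bound $\dim(\im\sigma_s)_{gF}>(1-d+\epsilon)|F|$ follows directly from the definition of the lower Banach mean dimension; your detour through $\sigma_c$ and its shift-invariance is not wrong, but it reproduces the route of Theorem~\ref{t:characterization-linear-GOE-nuca-banach-dim} rather than using the near-constancy hypothesis in the way the paper's proof does.
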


\begin{proof}
For the countable amenable group $G$, let us fix an increasing F\o lner sequence $\FF=(F_n)_{n \geq 0}$ of symmetric sets containing the unity $1_G$. Let us fix also a symmetric memory set $M \subset G$ of $\tau$ such that $1_G \in M$. 
We denote $\Delta = \{g \in G \colon s(g) \neq c(g) \}\subset G$ then $\overline{D}_G(\Delta)=0$ by hypothesis.    
\par 
For the notations, let $\Gamma = \im \sigma_s \subset A^G$ and $\varepsilon = \mdim \Gamma - (1-d)$ then $\varepsilon >0$ by hypothesis. 
Since $G$ is an infinite amenable group, we have:
\[
\lim_{n \to \infty} \frac{|\partial_{M^2} F_n|}{|F_n|}=0 \quad \text{and} \quad \lim_{n \to \infty} |F_n| = \infty
\]
and $(F_n^{-M^2})_{n \geq 0}$ is also a F\o lner sequence of $G$. Moreover, 
\[
\varepsilon + (1-d) =\underline{\mdim} \Gamma = \liminf_{n \to \infty} \frac{\dim \Gamma_{F_n}}{|F_n|}
\]
and 
\[
\liminf_{n \to \infty} \sup_{g \in G}\frac{|\Delta M \cap gF_n|}{|F_n|} = \overline{D}_G(\Delta M)=0.
\] 
\par 
Therefore, there exists $n_0 \in \N$ such that for all $n \geq n_0$, we have 
\begin{align}
\label{e:linera-GOE-nuca-mdim-pre-1-b}
 \frac{|\partial_{M^2} F_n|}{|F_n|} <\frac{\varepsilon}{8}, 
\end{align} 

\begin{align}
\label{e:linera-GOE-nuca-mdim-pre-1-c}
\frac{\dim \Gamma_{F_n^{-M^2}}}{|F_n|} \geq \frac{\varepsilon}{2} + 1-d, \quad \text{and}  
\end{align} 

\begin{align}
    \label{e:linera-GOE-nuca-mdim-pre-1-d} 
  \frac{|\Delta  \cap gF_n|}{|F_n|} < \frac{\varepsilon}{16} \quad \text{for all }  g \in G. 
\end{align}
\par 
By  Lemma~\ref{l:ornstein-weiss},  there exist an integer $r>0$ and  an ${\varepsilon}$-disjoint and $\left(1-\frac{\varepsilon}{100}\right)$-covering quasi-tiling $T=\{(T_i, T_i^o)\}_{i \in I}$ of $G$ with $r$ shapes $F_{n_1}, \dots, F_{n_r}$ such that $n_0 <n_1 <\dots<n_r$ and $T_i^o=T_i^{-M}$ for every $i \in I$. 
In particular, $T_i^o \cap T_j^o= \varnothing$ for all $i \neq j$ in $I$. 
For every $i \in I$, let $F_{k_i}$ be the shape of $T_i$ where $k_i \in \{n_1, \dots n_r\}$ and let $g_i \in G$ be the center of $T_i$, i.e., 
\[
T_i=g_iF_{k_i}.
\] 
\par 
Recall that for every subset $F \subset G$, we have $\partial_{M^2} F= FM^2 \setminus F^{-M^2}$ where $F^{-M^2} = \{g \in F\colon gM^2 \subset F\}$ since $1_G\in M$ and $M=M^{-1}$. Hence, for every $n \geq n_0$, we infer from the inequality \eqref{e:linera-GOE-nuca-mdim-pre-1-b} that 
\begin{equation}
\label{e:linera-GOE-nuca-mdim-pre-1-a}
|F_n^{-M^2}| \geq |F_n| - |\partial_{M^2} F_n| \geq \left(1-\frac{\varepsilon}{8} \right) |F_n|. 
 \end{equation} 
\par 
Note that $M\subset M^2$ as $1_G \in M$. Hence, we have $F_{k_i}^{-M^2} \subset F_{k_i}^{-M}$ for every $i \in I$ by definition because  $gM^2 \subset F_{k_i}$ implies $gM \subset gM^2 \subset F_{k_i}$. Consequently, it holds for every $i \in I$ that:    
\begin{align}
    \label{e:linera-GOE-nuca-mdim-pre-1-f}
    E_i = T_i^o \cap g_i F_{k_i}^{-M^2} = g_iF_{k_i}^{-M} \cap g_i F_{k_i}^{-M^2} =  g_i F_{k_i}^{-M^2} = T_i^{-M^2}. 
\end{align}
\par 
It follows that $E_i M \subset g_iF_{k_i}^{-M} = T_i^o$ for all $i \in I$. Moreover, for all $i \neq j$ in $I$, we have $T_i^o \cap T_j^o= \varnothing$ by hypothesis and  thus $E_iM \cap E_jM=\varnothing$. 
\par 
On the other hand, we deduce from \eqref{e:linera-GOE-nuca-mdim-pre-1-d} and \eqref{e:linera-GOE-nuca-mdim-pre-1-a} that for all $i \in I$: 
\begin{align}
\label{e:linera-GOE-nuca-mdim-pre-1-g}
 |E_i \setminus (\Delta \cup g_i \Delta) | & \geq   |g_iF_{k_i}^{-M^2}| - |\Delta  \cap g_iF_{k_i}| - |g_i\Delta  \cap g_iF_{k_i}|  \nonumber   \\ 
 & \geq   \left(1-\frac{\varepsilon}{8} \right) |F_{k_i}| - \frac{\varepsilon}{16} |F_{k_i}| - \frac{\varepsilon}{16} |F_{k_i}| 
\nonumber \\ &\geq   (1-\frac{\varepsilon}{4}) |F_{k_i}| \nonumber
 \\
 &= (1-\frac{\varepsilon}{4}) |T_i|. 
\end{align}
 \par 
Let $R_i = E_i \setminus (\Delta \cup g_i \Delta) \subset g_i F_{k_i}^{-M^2} $ for $i \in I$. 
For $i \in I$, let $Z_i= \{0\}^{G \setminus E_iM} \times K^{E_iM}$.  Since $M$ is a memory set of $\sigma_s$ and $R_iM \subset E_iM$,  the NUCA $\sigma_s$ induces by restriction linear maps 
$\varphi_i \colon  Z_i \to K^{R_i}$  
for $i \in I$. 
By \eqref{e:linera-GOE-nuca-mdim-pre-1-c}, \eqref{e:linera-GOE-nuca-mdim-pre-1-f}, and \eqref{e:linera-GOE-nuca-mdim-pre-1-g}, and since $s\vert_{G\setminus \Delta}$ is constant, we have 
\begin{align} 
\label{e:linera-GOE-nuca-mdim-pre-1-image-est}
\dim \im \varphi_i  & = \dim \Gamma_{R_i} = \dim \left(\Gamma_{g_iF_{k_i}^{-M^2}}\right)_{R_i} \\
& = \dim \left(\Gamma_{F_{k_i}^{-M^2}}\right)_{g_i^{-1}R_i} \nonumber\\
& \geq  \dim \left(\Gamma_{F_{k_i}^{-M^2}}\right) 
- |F_{k_i}^{-M^2} \setminus g_i^{-1}R_i|\nonumber\\
& \geq  \left(\frac{\varepsilon}{2} + 1-d \right)|F_{k_i}| - (|F_{k_i}| - |g_i^{-1}R_i|) \nonumber\\
& \geq \left(\frac{\varepsilon}{4}+1-d\right) |F_{k_i}|.\nonumber
\end{align} 
\par 
Consequently, for every $i \in I$, we find that: 
\begin{align*}
    \dim \Ker \varphi_i  & = \dim Z_i - \dim \im \varphi_i \\
    & \leq |E_iM| - \left(\frac{\varepsilon}{4}+1-d\right) |F_{k_i}|\\
    & \leq |F_{k_i}| - \left(\frac{\varepsilon}{4}+1-d\right) |F_{k_i}| \\ & = \left(d- \frac{\varepsilon}{4}\right)|F_{k_i}| \\
    & = \left(d- \frac{\varepsilon}{4}\right)|T_{i}|. 
\end{align*}
and thus by linear algebra, we can  find a finite subset $S_i \subset E_iM$ with $|S_i|\leq \left(d- \frac{\varepsilon}{4}\right)|F_{k_i}|$ such that for $U_i=  \{0\}^{G\setminus E_iM} \times \{0\}^{S_i}  \times K^{E_iM \setminus S_i}$, the restriction  linear map 
$\varphi\vert_{U_i}$ is injective. 
Let us define 
\[
S=   G\,\setminus \bigcup_{i \in I}     \left(E_iM \setminus S_i\right). 
\]
\par 
It follows from the construction of the sets $S_i$ that $\sigma_s\vert_U$ is pre-injective where $U=\{0\}^S \times K^{G\setminus S}$. We claim that $\overline{d}_G(S) \leq d$. 
Indeed, for every $i \in I$, the relations  \eqref{e:linera-GOE-nuca-mdim-pre-1-b} and $|S_i|\leq \left(d- \frac{\varepsilon}{4}\right)|F_{k_i}|$ imply that: 
\begin{align*}
    |E_iM \setminus S_i|
 & \geq |E_iM| - |S_i| \geq |E_i| - |S_i| \\
 & \geq |F_{k_i}^{-M^2}| - |S_i|  = \left(1-\frac{\varepsilon}{8} \right) |F_{k_i}| - (d-\frac{\varepsilon}{4}) |F_{k_i}| \\
 & = \left( 1 - d + \frac{\varepsilon}{8} \right)|F_{k_i}|=\left( 1 - d + \frac{\varepsilon}{8} \right)|T_i|. 
\end{align*}
 \par 
 Therefore,  
$T'=\{(T_i, E_iM\setminus S_i)\}_{i \in I}$ is a $ \left(d - \frac{\varepsilon}{8} \right)$-disjoint and  $\left(1-{\frac{\varepsilon}{100}}\right)$-covering quasi-tiling of $G$ and we  infer from Lemma~\ref{l:technical-lem-density} that: 
\begin{align*}  
   \underline{D}_{G}\left( \bigcup_{i \in I}    E_iM \setminus S_i  \right) & \geq  \left(1 - d + \frac{\varepsilon}{8} \right) \left(1-\frac{\varepsilon}{100}\right) \nonumber\\
   & = 1-d +\varepsilon\left( \frac{1}{8} - \frac{1-d+\frac{\varepsilon}{8}}{100} \right) \nonumber\\
   & \geq 1 - d.  
\end{align*} 
\par 
Consequently, by Lemma~\ref{l:chain-densities}, we obtain: 
\begin{align*}
    \overline{D}_G(S)  = 1 -  \underline{D}_{G}\left( \bigcup_{i \in I}    E_iM \setminus S_i  \right) \leq d,  
\end{align*}
and the proof is thus complete. 
\end{proof}

By adopting the proof of Theorem~\ref{t:linera-GOE-nuca-mdim-pre}, we obtain the following result. 

\begin{theorem}
\label{t:characterization-linear-GOE-nuca-banach-dim}
Let $G$ be an infinite countable amenable group and let $K$ be a field. Then for every linear NUCA $\tau \colon K^G \to K^G$ with finite memory with $\underline{\mDim} (\im \tau ) > 1-d$, there exists $S\subset K^G$ with $\overline{D}_G(S)\leq d$  such that  $\tau\vert_U$ is pre-injective where $U=\{0\}^{S} \times K^{G \setminus S}$. 
\end{theorem}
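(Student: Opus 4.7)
The plan is to adapt the proof of Theorem~\ref{t:linera-GOE-nuca-mdim-pre} mutatis mutandis, exploiting the fact that the lower Banach mean dimension $\underline{\mDim}(\Gamma)$ carries an infimum over all $G$-translates and therefore already encodes, for free, the translation averaging that was previously supplied by the hypothesis that $s$ is asymptotically constant. Accordingly I would drop the sets $\Delta$ and $g_i\Delta$ from the former argument entirely and simply take $R_i = E_i = T_i^{-M^2}$, since there is no distinguished bad locus to avoid.

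Concretely, set $\Gamma = \im \tau$ and $\varepsilon = \underline{\mDim}(\Gamma) - (1-d) > 0$, fix a symmetric memory set $M \ni 1_G$ of $\tau$, and choose an increasing F\o lner sequence $(F_n)_{n \geq 0}$ of symmetric sets containing $1_G$. The first task is to pick $n_0$ large enough that for every $n \geq n_0$ one has simultaneously $|\partial_{M^2} F_n|/|F_n| < \varepsilon/8$ and $\inf_{g \in G}\dim(g\Gamma)_{F_n}/|F_n| \geq 1 - d + \varepsilon/2$, the latter coming directly from the definition of $\underline{\mDim}$. I would then apply Lemma~\ref{l:ornstein-weiss} with a suitably small auxiliary parameter $\delta>0$ (fixed at the end) to produce a $\delta$-disjoint $(1-\delta)$-covering quasi-tiling $\{(T_i, T_i^o)\}_{i\in I}$ with shapes drawn from $\{F_{n_1},\dots,F_{n_r}\}$, where $n_0<n_1<\cdots<n_r$ and $T_i^o=T_i^{-M}$. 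Writing $T_i = g_i F_{k_i}$ and setting $E_i = T_i^{-M^2}$, the sets $E_iM \subset T_i^o$ are pairwise disjoint.

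The central estimate would read
\begin{equation*}
\dim \Gamma_{E_i} = \dim (g_i^{-1}\Gamma)_{F_{k_i}^{-M^2}} \geq \dim (g_i^{-1}\Gamma)_{F_{k_i}} - |\partial_{M^2} F_{k_i}| \geq \bigl(1-d+\tfrac{\varepsilon}{4}\bigr)|T_i|,
\end{equation*}
where the first equality merely relabels coordinates, the middle inequality uses that restricting a subspace drops dimension by at most the number of coordinates discarded, and the last applies the infimum bound at $g = g_i^{-1}$. Defining $Z_i = \{0\}^{G\setminus E_iM}\times K^{E_iM}$ and $\varphi_i\colon Z_i \to K^{E_i}$ by $z\mapsto \tau(z)|_{E_i}$, a finite-memory patching argument (requiring no constancy) gives $\im\varphi_i = \Gamma_{E_i}$, whence $\dim\ker\varphi_i \leq (d-\varepsilon/4)|T_i|$. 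By elementary linear algebra I would then pick $S_i\subset E_iM$ with $|S_i|\leq(d-\varepsilon/4)|T_i|$ so that $\varphi_i$ is injective on $U_i = \{0\}^{G\setminus E_iM}\times\{0\}^{S_i}\times K^{E_iM\setminus S_i}$, and set $S = G\setminus\bigcup_{i\in I}(E_iM\setminus S_i)$.

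Pre-injectivity of $\tau|_U$ with $U=\{0\}^S\times K^{G\setminus S}$ will then follow from pairwise disjointness of the $E_iM$: an asymptotic pair $u,v\in U$ with $\tau(u)=\tau(v)$ has $w = u-v$ of finite support, vanishing on $S$ and hence on each $S_i$; defining $\tilde w_i$ to agree with $w$ on $E_iM$ and vanish outside, one gets $\tilde w_i\in U_i$ and $\varphi_i(\tilde w_i) = \tau(w)|_{E_i} = 0$, so by injectivity of $\varphi_i|_{U_i}$ the patches $\tilde w_i$ vanish, forcing $w = 0$. For the density bound, $|E_iM\setminus S_i|\geq(1-d+\varepsilon/8)|T_i|$ renders $\{(T_i,E_iM\setminus S_i)\}$ a $(d-\varepsilon/8)$-disjoint $(1-\delta)$-covering quasi-tiling; fixing $\delta$ small enough that $(1-d+\varepsilon/8)(1-\delta)\geq 1-d$ and combining Lemma~\ref{l:technical-lem-density} with Lemma~\ref{l:chain-densities} yields $\overline{D}_G(S)\leq d$. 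I anticipate no substantial obstacle: the argument is essentially a line-by-line transcription of Theorem~\ref{t:linera-GOE-nuca-mdim-pre} in which the role of the asymptotic-constancy hypothesis is played by the translation invariance built into $\underline{\mDim}$; the only point worth checking with care is the identity $\im\varphi_i = \Gamma_{E_i}$, which amounts to the routine observation that for any $x\in K^G$ the configuration agreeing with $x$ on $E_iM$ and vanishing outside lies in $Z_i$ and has the same $E_i$-image under $\tau$, since all memory for cells in $E_i$ is contained in $E_iM$.
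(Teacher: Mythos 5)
Your proposal is correct and follows essentially the same route as the paper: the paper's own proof is exactly a \emph{mutatis mutandis} adaptation of Theorem~\ref{t:linera-GOE-nuca-mdim-pre} with $\Delta=\varnothing$ (so $R_i=E_i$), replacing the constancy-based translation step by the uniform bound $\dim \Gamma_{gF_n^{-M^2}} \geq (\frac{\varepsilon}{2}+1-d)|F_n|$ for all $g\in G$ supplied by $\underline{\mDim}$, which is precisely your central estimate. Your extra verifications (that $\im\varphi_i=\Gamma_{E_i}$ needs no constancy, and the patching argument for pre-injectivity) only make explicit what the paper leaves implicit.
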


\begin{proof}
The proof follows the same lines, \emph{mutatis mutandis}, of the proof of Theorem~\ref{t:linera-GOE-nuca-mdim-pre} where we set $\Delta = \varnothing$ so that $R_i=E_i$ and we simply replace  the relation  \eqref{e:linera-GOE-nuca-mdim-pre-1-c} by 
\begin{align}
\label{e:linera-GOE-nuca-mdim-pre-1-c-banach}
\frac{\dim \Gamma_{gF_n^{-M^2}}}{|F_n|} \geq \frac{\varepsilon}{2} + 1-d,  \quad \text{for all } g \in G.   
\end{align}
\par 
Consequently, we can modify the estimation \eqref{e:linera-GOE-nuca-mdim-pre-1-image-est} directly by: 
\begin{align*} 
\dim \im \varphi_i  & = \dim \Gamma_{R_i} = \dim \left(\Gamma_{g_iF_{k_i}^{-M^2}}\right)_{R_i} \\
& \geq  \dim \left(\Gamma_{g_iF_{k_i}^{-M^2}}\right) 
- |g_iF_{k_i}^{-M^2} \setminus  R_i|\\
& \geq  \left(\frac{\varepsilon}{2} + 1-d \right)|F_{k_i}| - (|F_{k_i}| - |R_i|) \\
& \geq \left(\frac{\varepsilon}{4}+1-d\right) |F_{k_i}|
\end{align*} 
and the rest of the proof is identical. 
\end{proof}

\par 
As a consequence, we obtain the following quantitative characterization of the partial pre-injectivity in terms of the mean dimension of the image of linear NUCA. 

\begin{theorem}
\label{t:characterization-linera-GOE-nuca-mdim-pre}
Let $G$ be an infinite countable amenable group and let $A$ be a finite-dimensional  vector space alphabet. For every linear NUCA  $\tau \colon A^G \to A^G$ with finite memory, the following hold:   
\begin{enumerate} [\rm (i)]
\item If $\tau\vert_U$ is pre-injective where $U=\{p\} \times A^{G \setminus S}$ and $p \in A^S$ for some subset $S \subset G$,  then  $\overline{\mdim} (\im \tau ) \geq (1-\overline{d}_G(S))\dim A$. 
    \item 
    If $\dim A=1$ and $\underline{\mDim} (\im \tau ) > 1-d$, then there exists $S\subset A^G$ with $\overline{D}_G(S)\leq d$  such that  $\tau\vert_U$ is pre-injective where $U=\{0\}^{S} \times A^{G \setminus S}$. 
\end{enumerate}
\end{theorem}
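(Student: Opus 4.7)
The plan for part (i) is to carry over the proof of Theorem~\ref{t:GOE-nuca-c-1}, replacing logarithms of cardinalities by dimensions and using linearity of $\tau$ at the decisive step. A preliminary reduction sets $p=0$: since $U = \{p\} \times A^{G\setminus S}$ is an affine subspace modelled on the linear subspace $V = \{0\}^S \times A^{G \setminus S}$, the linearity of $\tau$ shows that pre-injectivity of $\tau\vert_U$ is equivalent to the assertion that no nonzero finitely supported element of $V$ lies in $\Ker\tau$, i.e.\ to pre-injectivity of $\tau\vert_V$.

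The main argument then proceeds by contradiction. Set $\Gamma = \tau(V) \subset \im\tau$ and fix a symmetric memory set $M$ of $\tau$ with $1_G \in M$, together with the F\o lner net $\FF=(F_i)_{i\in I}$. Suppose $\overline{\mdim}_\FF \Gamma < (1-\overline{d}_\FF(S)) \dim A$. The decomposition $F_iM = F_i \sqcup (F_iM\setminus F_i)$ with $|F_iM\setminus F_i|\leq |\partial_M F_i|$ gives, via linear algebra applied to the projection $\Gamma_{F_iM}\twoheadrightarrow \Gamma_{F_i}$,
\begin{equation*}
\frac{\dim \Gamma_{F_iM}}{|F_i \setminus S|} \leq \frac{|F_i|}{|F_i \setminus S|}\cdot\frac{\dim \Gamma_{F_i}}{|F_i|} + \frac{|\partial_M F_i|}{|F_i|}\cdot\frac{|F_i|}{|F_i\setminus S|}\dim A,
\end{equation*}
and the F\o lner property $|\partial_M F_i|/|F_i|\to 0$ together with $\limsup|F_i|/|F_i\setminus S| = 1/(1-\overline{d}_\FF(S))$ makes the $\limsup$ of the right-hand side strictly smaller than $\dim A$. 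Hence some $F_k$ satisfies $\dim \Gamma_{F_kM} < |F_k \setminus S| \dim A$. I would then consider the finite-dimensional linear subspace $Z = \{0\}^S \times A^{F_k\setminus S} \times \{0\}^{G \setminus (S \cup F_k)} \subset V$, of dimension $|F_k \setminus S|\dim A$ and supported on $F_k$, so that $\tau(Z)$ is supported on $F_kM$ and $\dim \tau(Z) \leq \dim \Gamma_{F_kM} < \dim Z$. The linear map $\tau\vert_Z$ then has a nontrivial kernel, producing a nonzero finitely supported element of $V$ killed by $\tau$, which contradicts pre-injectivity of $\tau\vert_V$.

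Part (ii) is a direct application of Theorem~\ref{t:characterization-linear-GOE-nuca-banach-dim}: since $\dim A = 1$ the alphabet is isomorphic as a vector space to the underlying field $K$, so $A^G \cong K^G$ as a linear NUCA setting, and the cited theorem supplies a set $S$ with $\overline{D}_G(S) \leq d$ and the subspace $U = \{0\}^S \times A^{G\setminus S}$ on which $\tau$ is pre-injective. The only genuinely new ingredient of the present theorem is the linear/affine reduction at the start of (i), which is what enables the kernel-dimension counting in place of the cardinality bound used in Theorem~\ref{t:GOE-nuca-c-1}; the remaining steps are a mechanical translation from $\log|\cdot|$ to $\dim$, and I do not expect any further obstacle.
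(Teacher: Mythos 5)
Your proposal is correct and follows essentially the same route as the paper: part (i) is the paper's intended ``mutatis mutandis'' adaptation of Theorem~\ref{t:GOE-nuca-c-1}, replacing cardinality counting by the dimension bound $\dim \Gamma_{F_iM}\leq \dim\Gamma_{F_i}+|\partial_M F_i|\dim A$ and the pigeonhole step by a kernel-dimension argument, and part (ii) is, exactly as in the paper, just an invocation of Theorem~\ref{t:characterization-linear-GOE-nuca-banach-dim}. Your explicit affine reduction from $U=\{p\}\times A^{G\setminus S}$ to $V=\{0\}^S\times A^{G\setminus S}$ and the identification of pre-injectivity with the absence of nonzero finitely supported kernel elements are correct and usefully spell out details the paper leaves implicit.
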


\begin{proof}
    Assertion (i) results,  \emph{mutatis mutandis}, from a similar proof of Theorem~\ref{t:GOE-nuca-c-1} where we replace $\overline{\ent}_\FF$ by $\overline{\mdim}_\FF$ and $\log |A|$ by $\dim A$. Moreover, with the same notations as in the proof of Theorem~\ref{t:GOE-nuca-c-1}, we deduce from the inclusion  $ \Gamma_{F_nM} \subset \Gamma_{F_n} \times A^{\partial_M F_n}$ that 
     $\dim \Gamma_{F_nM}\leq \dim \Gamma_{F_n} + \dim  A^{\partial_M F_n}$. Other modifications in the proof are trivial. 
     Assertion (ii) is nothing but the content of Theorem~\ref{t:characterization-linear-GOE-nuca-banach-dim}. The proof is thus complete. 
\end{proof}

% \textbf{Declaration of competing interest.}  The authors declare that they have no known competing financial interests or personal relationships that could have appeared to influence the work reported in this paper.

\bibliographystyle{siam}

\end{document}